\def\b{{\mathcal{B}}}
\def\R{{\mathbb{R}}}
\def\N{{\mathbb{N}}}
\def\C{{\mathbb{C}}}
\def\G{{\overline{G}}}
\def\Z{{\mathbb{Z}}}
\def\I{{\mathcal{I}}}
\def\T{{\mathbb{T}}}
\def\U{{\mathcal{U}}}
\renewcommand{\b}{\mathcal{B}}
\renewcommand{\k}{\mathcal{K}}
\newcommand{\lf}{\leq_{\mathrm{fin}}}
\DeclareMathOperator{\Ann}{\mathrm{Ann}}
\DeclareMathOperator{\Aut}{\mathrm{Aut}}
\DeclareMathOperator{\Ad}{\mathrm{Ad }}
\DeclareMathOperator{\id}{\mathrm{id}}
\DeclareMathOperator{\dr}{\dim_{\mathrm{Rok}}}
\newcommand{\ssubset}{\subset\mathrel{\mkern-9mu}\subset}
\newtheorem{theorem}{Theorem}[section]
\newtheorem{prop}[theorem]{Proposition}
\newtheorem{cor}[theorem]{Corollary}
\newtheorem{lemma}[theorem]{Lemma}
\newtheorem{mainthm}{Theorem}
\theoremstyle{definition}
\newtheorem{definition}[theorem]{Definition}
\newtheorem{rem}[theorem]{Remark}
\title{Rokhlin dimension and Inductive Limit Actions on AF-algebras}
\author{Sureshkumar M and Prahlad Vaidyanathan}
\email{sureshkumar21@iiserb.ac.in, prahlad@iiserb.ac.in}
\begin{document}
\begin{abstract}
Given a separable, AF-algebra $A$ and an inductive limit action on $A$ of a finitely generated abelian group with finite Rokhlin dimension with commuting towers, we give a local description of the associated crossed product C*-algebra. In particular, when $A$ is unital and $\alpha \in \Aut(A)$ is approximately inner and has the Rokhlin property, we conclude that $A\rtimes_{\alpha} \Z$ is an A$\T$-algebra.
\end{abstract}

\maketitle

Given a group action on a C*-algebra, one is often interested in the structure of the associated crossed product C*-algebra. The crossed product is a flexible and powerful construction, and it is often difficult to describe it unless we impose some conditions on the action or algebra. One such condition that has proved to be helpful in such investigations is that of the Rokhlin property and its higher dimensional analogue, Rokhlin dimension. Given an action of a compact group with finite Rokhlin dimension with commuting towers, Gardella, Hirshberg and Santiago \cite{gardella_hirshberg_santiago} have been successful in describing the structure of the associated crossed product. In this paper, we wish to extend their ideas to the context of residually finite groups. \\

This is complicated in general, so it felt prudent to begin by analyzing actions of abelian groups on AF-algebras, which may be thought of as `zero-dimensional' C*-algebras. Here, we were able to partially describe the crossed product when the action is built up from actions on finite dimensional subalgebras (\autoref{thm_local_approximation_crossed_product}). When restricted to the action of a single automorphism, we obtain the following result, which is a partial generalization of results of Kishimoto et al, which were proved for UHF algebras (see \cite{bratteli_quasifree}, \cite{kishimoto_rohlin_uhf} and \cite{bratteli_evans_kishimoto_rordam}).

\begin{mainthm}\label{mainthm_rokhlin_integer}
Let $A$ be a separable, unital AF-algebra and let $\alpha \in \Aut(A)$ be an approximately inner automorphism. If $\alpha$ has the Rokhlin property, then $A\rtimes_{\alpha} \Z$ is an A$\T$-algebra. Moreover, if $A$ is simple, then $A\rtimes_{\alpha} \Z$ is simple and has real rank zero.
\end{mainthm}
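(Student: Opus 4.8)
The plan is to derive \autoref{mainthm_rokhlin_integer} from \autoref{thm_local_approximation_crossed_product}. For $G=\Z$ the hypothesis ``finite Rokhlin dimension with commuting towers'' degenerates to the Rokhlin property, so the only thing to arrange before invoking \autoref{thm_local_approximation_crossed_product} is that $\alpha$ is, after conjugation, an \emph{inductive limit action}; and since replacing $\alpha$ by $\varphi\alpha\varphi^{-1}$ for $\varphi\in\Aut(A)$ changes neither the isomorphism class of $A\rtimes_\alpha\Z$, nor approximate innerness, nor the Rokhlin property, it is harmless to conjugate. The first --- and, I expect, hardest --- step is therefore to use both the approximate innerness and the Rokhlin property of $\alpha$ to produce $\varphi\in\Aut(A)$ together with an increasing sequence $A_1\subseteq A_2\subseteq\cdots$ of finite-dimensional $C^*$-subalgebras of $A$, with dense union, each invariant under $\alpha':=\varphi\alpha\varphi^{-1}$. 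I would build these by an Elliott-type intertwining: at each stage, approximate innerness lets one implement $\alpha'$ on the relevant finite set by a unitary, which can be perturbed into a finite-dimensional subalgebra since AF-algebras have real rank zero, while the Rokhlin property supplies the room needed to absorb the remaining part of the dynamics into the next level; the successive perturbing unitaries are arranged to converge, and $\varphi$ is their limit. That the Rokhlin property is genuinely needed here is visible already non-unitally: $\mathrm{Ad}$ of the bilateral shift on $\mathcal{K}(\ell^2(\Z))$ is approximately inner and acts trivially on $K_0$, yet is conjugate to no inductive limit action, because a unitary with no finite-dimensional reducing subspace normalizes no finite-dimensional subalgebra.

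Granting this reduction, write $(A,\alpha)\cong\varinjlim(A_n,\alpha_n)$ with $\alpha_n\in\Aut(A_n)$ compatible. By \autoref{thm_local_approximation_crossed_product} applied with $G=\Z$ (or simply because the full crossed product commutes with inductive limits), $A\rtimes_\alpha\Z\cong\varinjlim\bigl(A_n\rtimes_{\alpha_n}\Z\bigr)$. Now each $A_n\rtimes_{\alpha_n}\Z$ is a circle algebra: $\alpha_n$ permutes the matrix summands of $A_n$, and a single cycle consisting of $\ell$ summands each $\cong M_d$ contributes, after the usual identification, a copy of $M_\ell\bigl(M_d\rtimes_{\alpha_n^\ell}\Z\bigr)\cong M_{d\ell}\bigl(C(\T)\bigr)$, since the automorphism $\alpha_n^\ell$ of $M_d$ is inner. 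Hence $A\rtimes_\alpha\Z$ is an inductive limit of finite direct sums of algebras $M_k(C(\T))$, i.e.\ an A$\T$-algebra. (Alternatively, keeping $\alpha$ general, the same conclusion follows from the local approximation by circle algebras of \autoref{thm_local_approximation_crossed_product}, their semiprojectivity, and the separability of $A\rtimes_\alpha\Z$.)

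Finally, suppose $A$ is simple. The Rokhlin property forces $\alpha^n$ to be outer for every $n\neq 0$, so $A\rtimes_\alpha\Z$ is simple (and unital) by Kishimoto's theorem on outer $\Z$-actions on simple $C^*$-algebras. For real rank zero: approximate innerness gives $\alpha_*=\id$ on $K_0(A)$, so the Pimsner--Voiculescu sequence (recall $K_1(A)=0$) yields $K_0(A\rtimes_\alpha\Z)\cong K_1(A\rtimes_\alpha\Z)\cong K_0(A)$; moreover traces on an AF-algebra are determined by their restriction to $K_0$, so every trace on $A$ is $\alpha$-invariant, and, the Rokhlin property giving unique trace extension, $T(A\rtimes_\alpha\Z)\cong T(A)$, with the pairing $K_0(A\rtimes_\alpha\Z)\to\mathrm{Aff}(T(A\rtimes_\alpha\Z))$ identified with $K_0(A)\to\mathrm{Aff}(T(A))$. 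The latter has dense image since $A$ is a simple unital AF-algebra, and a simple unital A$\T$-algebra whose ordered $K_0$-group maps with dense image into the affine functions on its trace simplex has real rank zero (a consequence of the classification of simple A$\T$-algebras). The one real obstacle throughout is the first step --- realizing $\alpha$, up to conjugacy, as an inductive limit action; once that is secured, everything else is structural bookkeeping together with appeals to known results.
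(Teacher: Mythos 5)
Your outline matches the paper's --- (1) use approximate innerness plus the Rokhlin property to replace $\alpha$ by an inductive limit automorphism, (2) realize the crossed product as a limit of circle algebras, (3) treat simplicity and real rank zero separately --- but the execution diverges in both directions. Steps (2) and (3) are where you take a genuinely different and in fact simpler route: your direct computation $A\rtimes_\alpha\Z\cong\varinjlim\bigl(A_n\rtimes_{\alpha_n}\Z\bigr)$, with each $A_n\rtimes_{\alpha_n}\Z$ a circle algebra via Green's imprimitivity theorem applied cycle by cycle, is correct and bypasses the profinite-completion and sequentially-split machinery entirely; the paper instead maps $A\rtimes_\alpha\Z$ sequentially split into $C(\overline{\Z}_\tau,A)\rtimes_{\sigma\otimes\alpha}\Z$ and identifies \emph{that} algebra as an inductive limit of circle algebras (by the same Green's theorem computation on the pieces $C(\Z/H_k,R_n)\rtimes\Z$), a detour that is needed for the general \autoref{thm_local_approximation_crossed_product} but not for $G=\Z$. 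Likewise you get simplicity from Kishimoto's outerness theorem where the paper uses the sequentially split map plus minimality of $\Z\curvearrowright\overline{\Z}_\tau$; both are fine. For real rank zero, your argument and the paper's hinge on the same assertion --- every trace on $A\rtimes_\alpha\Z$ vanishes on $aU^n$ for $n\neq 0$, hence is determined by its restriction to $A$ --- which you attribute in passing to ``unique trace extension from the Rokhlin property'' but do not prove; the paper proves it by Kishimoto's argument and then uses approximate divisibility of $A$ to see that projections of $A$ already separate the traces, concluding with the Blackadar--Bratteli--Elliott reduction theorem. Your density-of-$K_0$-in-$\mathrm{Aff}(T)$ variant is workable (for $A$ infinite dimensional), but that trace-restriction step needs an actual argument.

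The genuine gap is step (1), which you rightly identify as the crux but leave as a sketch: ``an Elliott-type intertwining where the Rokhlin property supplies the room needed'' does not say how the two hypotheses interact. The paper's mechanism is concrete. By Voiculescu's perturbation results it suffices to show $\alpha$ is an \emph{almost} inductive limit automorphism; one then obtains $\Ad(u)\circ\alpha$ limit periodic for a single unitary $u$ (note this is a cocycle perturbation, not a conjugation by an automorphism as you propose, though it equally preserves the crossed product, approximate innerness, and the Rokhlin property). To verify the almost-inductive-limit property one fixes a finite-dimensional $D\subset A$ and $\epsilon>0$; approximate innerness produces a \emph{cyclic} chain $B_1,\dots,B_m$ of finite-dimensional subalgebras containing $D$ with $d(\alpha(B_j),B_{j+1})<\epsilon$ cyclically, the Rokhlin property produces a matching tower of approximately central projections $e_1,\dots,e_m$ summing to approximately $1$ with $\alpha(e_j)\approx e_{j+1}$, and $B:=\sum_{j}e_jB_je_j$ is then a finite-dimensional subalgebra that almost contains $D$ and is almost $\alpha$-invariant. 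Without this construction (or an equivalent one making your intertwining converge), the reduction on which your entire proof rests is unestablished.
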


In \autoref{sec_preliminaries}, we begin by describing twisted C*-dynamical systems and twisted crossed products, which play a crucial role for us. We then discuss the profinite completion of a residually finite group which, together with the notion of a topological join, allows us to give a local description of crossed products when the action has finite Rokhlin dimension with commuting towers. In \autoref{sec_main}, we focus our attention on inductive limit actions on AF-algebras. We prove \autoref{thm_local_approximation_crossed_product}, a local approximation theorem for crossed products by such actions and we end with a proof of \autoref{mainthm_rokhlin_integer}.

\section{Preliminaries}\label{sec_preliminaries}

We begin with a number of preliminary notions that we will need as we go along. For convenience, we first fix some notation. Henceforth, all groups (denoted $G, H, K$, etc.) will be countable and discrete, and we will write $e$ for the identity of the group. We write $H\leq G$ if $H$ is a subgroup of $G$ and $H\lf G$ if $H$ is a subgroup of $G$ of finite index.  \\

If $Z$ is a topological space, we write $\dim(Z)$ for its Lebesgue covering dimension. If a group $G$ acts on $Z$ by homeomorphisms, we denote this by $G\curvearrowright Z$ or $\beta : G\curvearrowright Z$ if $\beta : G\to \text{Homeo}(Z)$ denotes the corresponding homomorphism. If $F'$ is a set, then we write $F\ssubset F'$ if $F$ is a finite subset of $F'$. \\

If $A$ is a C*-algebra and $a,b\in A$, we write $[a,b] := ab-ba$, and we write $a\approx_{\epsilon} b$ if $\|a-b\| < \epsilon$. If $A$ is unital, we write $1_A$ for the unit of $A$. We write $A_{\leq 1}$ for the closed unit ball in $A$. Finally, all C*-algebras discussed in this paper will be assumed to be separable and nuclear unless otherwise stated.

\subsection{Twisted C*-Dynamical System}

The main object of our investigation is a C*-dynamical system $(A,G,\alpha)$ where $G$ is a discrete, countable group, $A$ is a separable, nuclear C*-algebra and $\alpha : G\to \Aut(A)$ is a homomorphism. Given such a triple, there is an associated crossed product C*-algebra $A\rtimes_{\alpha} G$, which is the object we wish to describe. However, in the course of our investigations, we naturally come across a twisted C*-dynamical system. The construction of the twisted crossed product mirrors the construction of the usual crossed product, but is quite a bit more flexible. We now briefly revisit these notions from \cite{packer_raeburn_one} in the context of discrete groups. 

\begin{definition}\cite[Definition 2.1 and 2.3]{packer_raeburn_one}
~\begin{enumerate}
\item A twisted action of a discrete group $G$ on C*-algebra $A$ is a pair of maps $\alpha:G\to \Aut (A)$ and $u:G\times G\to \U M(A)$ such that for all $r, s,t\in G$, we have
\begin{enumerate}
\item $\alpha_e= \id_A, u(e,s)=u(s,e)=1$,
\item $\alpha_s\circ\alpha_t=\Ad u(s,t)\circ \alpha_{st} $, and
\item $\alpha_r(u(s,t))u(r,st)=u(r,s)u(rs,t)$.
\end{enumerate}
We shall refer to the quadruple $(A,G,\alpha,u)$ as a twisted C*-dynamical system.
\item A covariant representation of a twisted dynamical system $(A,G,\alpha,u)$ is a triple $(\pi, U, H)$ consisting of a non-degenerate $\ast$-representation $(\pi,H)$ of $A$ and a map $U:G\to \U(H)$ such that for all $s,t\in G$ and $a\in A$, we have
\begin{enumerate}
\item $U_sU_t=\pi(u(s,t))U_{st}$, and
\item $\pi(\alpha_s(a))=U_s\pi(a)U_s^{\ast}$.
\end{enumerate}
\end{enumerate}
\end{definition}

We now describe the (reduced) twisted crossed product C*-algebra associated to a twisted C*-dynamical system $(A,G,\alpha,u)$. Consider $C_c(G,A)$ to be the set of all finitely supported functions from $G$ to $A$, whose elements are written as $f = \sum_{s\in G} a_s s$. We equip $C_c(G,A)$ with an $(\alpha,u)$-twisted convolution product and $\ast$-operation given by
\[
(f\times_{u} g)(s) = \sum_{t\in G} f(t) \alpha_t(g(t^{-1}s))u(t,t^{-1}s), \text{ and } f^{\ast}(s) = u(s,s^{-1})^{\ast}\alpha_s(f(s^{-1})^{\ast}).
\]
Then, $C_c(G,A)$ is a $\ast$-algebra and every covariant representation $(\pi, U, H)$ of $(A,G,\alpha,u)$ induces a $\ast$-representation $\pi\times U : C_c(G,A)\to \b(H)$ given by
\[
(\pi\times U)(f) := \sum_{s\in G} \pi(f(s))U_s.
\]
Suppose $A$ is faithfully represented on a Hilbert space $H$. Define a representation $\pi_{\alpha} : A \to \b(H\otimes \ell^2(G))$ by $\pi_{\alpha}(a)(\zeta\otimes \delta_t) = \alpha_{t^{-1}}(a)\zeta\otimes \delta_t$, and a function $\lambda_u : G \to \b(H\otimes \ell^2(G))$ by $\lambda_{u}(g)(\zeta\otimes \delta_t) = u(t^{-1}g^{-1},g)\zeta\otimes \delta_{gt}$. Then, $(\pi_{\alpha},\lambda_{u}, H\otimes \ell^2(G))$ is a covariant representation of $(A,G,\alpha,u)$, and so it gives rise to a $\ast$-representation $\pi_{\alpha}\times \lambda_u : C_c(G,A)\to \b(H\otimes \ell^2(G))$. We define the (reduced) crossed product C*-algebra $A\rtimes_{\alpha, u} G$ as the norm closure of $C_c(G,A)$ under this map. For convenience, we write $a$ and $\lambda^{\alpha}_s$ for elements in $M(A\rtimes_{\alpha, u} G)$ corresponding to elements $a\in A$ and $s\in G$ respectively. Note that $\{a\lambda^{\alpha}_s : a \in A, s\in G\}$ spans a dense subset of $A\rtimes_{\alpha, u} G$.\\

From our point of view, the most important reason to discuss twisted C*-dynamical systems is that they arise naturally when trying to decompose the crossed product associated to an (untwisted) C*-dynamical system. We now give the version of \cite[Theorem 4.1]{packer_raeburn_one} that we need for our purposes.

\begin{theorem}\cite[Theorem 4.1]{packer_raeburn_one}\label{thm_decomposition}
Let $G$ be an abelian group, $N\leq G$ be a subgroup and let $Q := G/N$. Let $\alpha : G\to \Aut(A)$ be an action of $G$ on a C*-algebra $A$, let $\alpha_N : N\to \Aut(A)$ be the restriction and let $c : Q\to G$ be a section such that $c(N) = e$. Define $\alpha_Q : Q\to \Aut(A\rtimes_{\alpha_N} N)$ by $(\alpha_Q)_{sN}(a\lambda^{\alpha_N}_r) := \alpha_{c(sN)}(a)\lambda^{\alpha_N}_r$ and $v_Q : Q\times Q \to \U M(A\rtimes_{\alpha_N} N)$ by $v_Q(sN,tN) := \lambda^{\alpha_N}_{c(sN)c(tN)c(stN)^{-1}}$. Then $(\alpha_Q, v_Q)$ is a twisted action of $Q$ on $A\rtimes_{\alpha_N} N$, and
\[
A\rtimes_{\alpha} G \cong (A\rtimes_{\alpha_N} N)\rtimes_{(\alpha_Q,v_Q)} Q.
\]
\end{theorem}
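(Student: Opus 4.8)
The statement is essentially \cite[Theorem 4.1]{packer_raeburn_one} specialized to the abelian setting, so the plan is to verify that $(\alpha_Q,v_Q)$ really is a twisted action, to set up the underlying $\ast$-algebra isomorphism by hand, and then to pass to the completions using amenability. I would organize the work in three stages.

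\textbf{Stage 1: $(\alpha_Q,v_Q)$ is a twisted action.} The first thing to record is the basic fact that makes $v_Q$ meaningful: since $c(sN)\in sN$ and $c(tN)\in tN$, the product $c(sN)c(tN)$ lies in the coset $stN$, so $c(sN)c(tN)c(stN)^{-1}\in N$ and $\lambda^{\alpha_N}_{c(sN)c(tN)c(stN)^{-1}}$ is a genuine unitary in $M(A\rtimes_{\alpha_N}N)$. Next I would check that the prescription $a\lambda^{\alpha_N}_r\mapsto\alpha_{c(sN)}(a)\lambda^{\alpha_N}_r$ defines an endomorphism of $A\rtimes_{\alpha_N}N$, i.e.\ that $(a\mapsto\alpha_{c(sN)}(a),\ n\mapsto\lambda^{\alpha_N}_n)$ is a covariant pair for $\alpha_N$; this amounts to $\alpha_n\circ\alpha_{c(sN)}=\alpha_{c(sN)}\circ\alpha_n$, which holds since $G$ is abelian, and then the universal property gives $(\alpha_Q)_{sN}$. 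Invertibility of each $(\alpha_Q)_{sN}$ will follow once axiom (b) is known, since (b) with $t=s^{-1}$ exhibits one-sided inverses. Finally I would verify axioms (a)--(c): (a) is immediate from $c(N)=e$; for (b) one checks that both $(\alpha_Q)_{sN}\circ(\alpha_Q)_{tN}$ and $\Ad v_Q(sN,tN)\circ(\alpha_Q)_{stN}$ send $a\lambda^{\alpha_N}_r$ to $\alpha_{c(sN)c(tN)}(a)\lambda^{\alpha_N}_r$, using $\alpha_{c(sN)}\circ\alpha_{c(tN)}=\alpha_{c(sN)c(tN)}$ on one side and, on the other, that conjugating $\lambda^{\alpha_N}_r$ by $\lambda^{\alpha_N}_n$ is trivial (as $N$ is abelian) while $\alpha_n\circ\alpha_{c(stN)}=\alpha_{c(sN)c(tN)}$; and (c) reduces, after observing that $(\alpha_Q)_{rN}$ fixes every $\lambda^{\alpha_N}_n$, to the equality of $\lambda^{\alpha_N}$ evaluated at $c(s)c(t)c(r)c(rst)^{-1}$ and at $c(r)c(s)c(t)c(rst)^{-1}$ (coset names abbreviated), which holds because $G$ is abelian.

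\textbf{Stage 2: the underlying $\ast$-algebra isomorphism.} Every $g\in G$ factors uniquely as $g=c(gN)n$ with $n=c(gN)^{-1}g\in N$, giving a bijection $G\leftrightarrow Q\times N$. This induces a linear isomorphism $\Phi$ from $C_c(G,A)$ onto $C_c(Q,C_c(N,A))$, sending $f$ to $q\mapsto\big(n\mapsto f(c(q)n)\big)$, and the task is to show that $\Phi$ carries the $\alpha$-twisted convolution and involution on $C_c(G,A)$ to the $(\alpha_Q,v_Q)$-twisted convolution on $C_c(Q,A\rtimes_{\alpha_N}N)$, where the coefficients are viewed inside the dense subalgebra $C_c(N,A)\subset A\rtimes_{\alpha_N}N$. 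This is the computational core: one expands a product $\big(\sum_g f(g)\lambda^{\alpha}_g\big)\big(\sum_h g(h)\lambda^{\alpha}_h\big)$, substitutes $g=c(q)n$ and $h=c(q')m$, and reads off the $\alpha_{c(q)}$-terms, the inner $\lambda^{\alpha_N}$-terms, and the occurrences of $v_Q(q,q')=\lambda^{\alpha_N}_{c(q)c(q')c(qq')^{-1}}$ exactly against the formulas in \autoref{thm_decomposition}; the involution is handled the same way.

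\textbf{Stage 3: passing to completions.} Since $G$ is abelian it is amenable, as are the subgroup $N$ and the quotient $Q$; hence the reduced crossed products in the statement agree with the corresponding full crossed products (in the twisted case this is the amenability statement for $Q$). I would therefore either invoke the full-crossed-product form of \cite[Theorem 4.1]{packer_raeburn_one} directly, or else check by hand that, under the unitary $\ell^2(G)\cong\ell^2(Q)\otimes\ell^2(N)$ coming from the bijection of Stage 2, $\Phi$ intertwines the regular representation $\pi_\alpha\times\lambda$ of $A\rtimes_\alpha G$ with one unitarily equivalent to $\pi_{\alpha_Q}\times\lambda_{v_Q}$; either way $\Phi$ is isometric for the reduced norms and extends to the asserted isomorphism. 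The laborious part is the change of variables in Stage 2, and the only genuine subtlety is making the reduced norms match in Stage 3 — which is precisely where amenability of $G$, and the resulting coincidence of full and reduced (twisted) crossed products, is doing the essential work.
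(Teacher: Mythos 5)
The paper offers no proof of this statement at all: it is quoted verbatim (in the abelian, discrete setting) from Packer--Raeburn, so there is nothing internal to compare your argument against. Judged on its own terms, your outline is correct and is essentially the standard verification one would write if forced to prove the citation from scratch. Stage 1 is sound: the covariant-pair argument for well-definedness of $(\alpha_Q)_{sN}$, the use of commutativity of $G$ to get $\alpha_n\circ\alpha_{c(sN)}=\alpha_{c(sN)}\circ\alpha_n$, the cancellation $n\,c(stN)=c(sN)c(tN)$ in axiom (b), and the reduction of the cocycle identity (c) to commutativity of $G$ are all right; the only point worth making explicit is that $(\alpha_Q)_{sN}$ must first be extended strictly continuously to $M(A\rtimes_{\alpha_N}N)$ before you can say it ``fixes every $\lambda^{\alpha_N}_n$'', since those unitaries live in the multiplier algebra rather than in the crossed product itself. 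Stage 2 is the standard change of variables $g=c(gN)\cdot n$ and is correct in outline. Stage 3 correctly isolates the one genuine subtlety: the paper defines the \emph{reduced} twisted crossed product while \cite[Theorem 4.1]{packer_raeburn_one} is a statement about full twisted crossed products, so you do need amenability of $G$ (hence of $N$ and $Q$) to identify the two, either by citing the coincidence of full and reduced twisted crossed products for amenable discrete groups or by exhibiting the unitary $\ell^2(G)\cong\ell^2(N)\otimes\ell^2(Q)$ that intertwines the regular representations. In short: no gap, just a more laborious route than the paper's, which simply delegates everything to the reference.
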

The next lemma is now an immediate consequence.

\begin{lemma}\label{lem_tensor_product_decomposition}
Let $G$ be an abelian group, $N \leq G$ be a subgroup and let $Q := G/N$. Let $(A,G,\alpha)$ be a C*-dynamical system such that $\alpha_N$ is trivial. Given a C*-dynamical system $(B,G,\beta)$, let $\gamma : G\to \Aut (A\otimes B)$ be the tensor product action $\gamma_s = \alpha_s\otimes \beta_s$. Then, there is a twisted action $(\gamma_Q', v_Q')$ of $Q$ on $A\otimes (B\rtimes_{\beta_N} N)$ such that 
\[
(A\otimes B)\rtimes_{\gamma} G \cong (A\otimes (B\rtimes_{\beta_N} N))\rtimes_{\gamma_Q', v_Q'} Q.
\]
\end{lemma}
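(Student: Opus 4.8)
The plan is to apply \autoref{thm_decomposition} to the C*-dynamical system $(A\otimes B, G, \gamma)$ with respect to the subgroup $N$, and then to simplify the resulting inner crossed product using the hypothesis that $\alpha_N$ is trivial. Concretely, \autoref{thm_decomposition} produces a twisted action $(\gamma_Q, v_Q)$ of $Q$ on $(A\otimes B)\rtimes_{\gamma_N} N$ together with an isomorphism $(A\otimes B)\rtimes_{\gamma} G \cong \left((A\otimes B)\rtimes_{\gamma_N} N\right)\rtimes_{(\gamma_Q, v_Q)} Q$, where $\gamma_N = \alpha_N\otimes\beta_N = \id_A\otimes\beta_N$ because $\alpha_N$ is trivial.

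The first substantive step is to establish the canonical isomorphism $\Phi : (A\otimes B)\rtimes_{\id_A\otimes\beta_N} N \to A\otimes (B\rtimes_{\beta_N} N)$ determined on the dense $\ast$-subalgebra by $\Phi((a\otimes b)\lambda^{\gamma_N}_n) = a\otimes (b\lambda^{\beta_N}_n)$. Since the action on the first tensor factor is trivial, faithfully representing $A$ on $H_A$ and $B$ on $H_B$ and reorganizing $(H_A\otimes H_B)\otimes \ell^2(N)$ as $H_A\otimes(H_B\otimes\ell^2(N))$ identifies $\pi_{\gamma_N}(a\otimes b)$ with $a\otimes\pi_{\beta_N}(b)$ and $\lambda^{\gamma_N}_n$ with $1\otimes\lambda^{\beta_N}_n$; hence the image of $(A\otimes B)\rtimes_{\gamma_N} N$ under the regular representation is exactly $A\otimes(B\rtimes_{\beta_N} N)$ (minimal tensor product), which yields the spatial isomorphism $\Phi$. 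Amenability of the abelian group $N$ and nuclearity ensure there is no discrepancy between reduced and full constructions here. Finally, $\Phi$ extends to an isomorphism $\overline{\Phi}$ of multiplier algebras carrying $\lambda^{\gamma_N}_n$ to $1_{M(A)}\otimes\lambda^{\beta_N}_n$.

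Next I transport the twisted action across $\Phi$: set $(\gamma_Q')_{sN} := \Phi\circ(\gamma_Q)_{sN}\circ\Phi^{-1}$ and $v_Q'(sN,tN) := \overline{\Phi}(v_Q(sN,tN))$. Conjugating a twisted action by a $\ast$-isomorphism again produces a twisted action, so $(\gamma_Q', v_Q')$ is a twisted action of $Q$ on $A\otimes(B\rtimes_{\beta_N} N)$, and twisted crossed products are functorial with respect to isomorphisms intertwining the twisted actions (this is immediate from the formulas defining the convolution product on $C_c(Q,\cdot)$), so $\Phi$ induces an isomorphism $\left((A\otimes B)\rtimes_{\gamma_N} N\right)\rtimes_{(\gamma_Q,v_Q)} Q \cong \left(A\otimes(B\rtimes_{\beta_N} N)\right)\rtimes_{(\gamma_Q',v_Q')} Q$. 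Composing with the isomorphism from \autoref{thm_decomposition} gives the claim. If desired one can unwind the formulas: since $\alpha_N$ is trivial, $\alpha$ descends to an honest action $\overline{\alpha}$ of $Q$ on $A$, and one checks $(\gamma_Q')_{sN} = \overline{\alpha}_{sN}\otimes(\beta_Q)_{sN}$ and $v_Q'(sN,tN) = 1_{M(A)}\otimes v_Q^{\beta}(sN,tN)$, where $(\beta_Q, v_Q^{\beta})$ is the twisted action from \autoref{thm_decomposition} applied to $(B,G,\beta)$.

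The only mildly delicate point is the identification $\Phi$ of the inner crossed product with the tensor product $A\otimes(B\rtimes_{\beta_N} N)$; everything else is bookkeeping with \autoref{thm_decomposition} and the functoriality of twisted crossed products under equivariant isomorphisms. Because we work with reduced crossed products and minimal tensor products throughout, the spatial argument above makes this identification routine, which is why the lemma is an immediate consequence of \autoref{thm_decomposition}.
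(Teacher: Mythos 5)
Your proposal is correct and follows essentially the same route as the paper: both arguments apply \autoref{thm_decomposition} to $(A\otimes B, G, \gamma)$, identify $(A\otimes B)\rtimes_{\gamma_N} N$ with $A\otimes(B\rtimes_{\beta_N} N)$ via the evident map (your $\Phi$ is the inverse of the paper's $\Theta$), and transport the twisted action across this isomorphism. The only cosmetic difference is that the paper writes down the formulas for $(\gamma_Q', v_Q')$ first and verifies the intertwining, whereas you define them by conjugation and then unwind the formulas; your spatial justification that $\Phi$ is an isomorphism is a detail the paper leaves implicit.
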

\begin{proof}
Let $R := B\rtimes_{\beta_N} N$ and define $\gamma_Q' : Q\to \Aut(A\otimes R)$ by $(\gamma_Q')_{sN}(a\otimes f) := \alpha_{s}(a)\otimes (\beta_Q)_{sN}(f)$ for $a\in A, f\in C_c(N,B)$, where $\beta_Q : Q \to \Aut(B\rtimes_{\beta_N} N)$ is defined as in \autoref{thm_decomposition}. Let $c : Q\to G$ be a section such that $c(N) = e$, and define $v_Q' : Q\to \U M(A\otimes R)$ by $v_Q'(sN,tN) := 1\otimes \lambda^{\beta_N}_{c(sN)c(tN)c(stN)^{-1}}$. Define $\Theta : A\otimes R \to (A\otimes B)\rtimes_{\gamma_N} N$ to be the isomorphism given by
\[
\Theta\left(a\otimes (\sum_{t\in N} b_t\lambda^{\beta_N}_t)\right) = \sum_{t\in N} (a\otimes b_t)\lambda^{\gamma_N}_t
\]
for $a\in A, f = \sum_{t\in N} b_t\lambda^{\beta_N}_t \in C_c(N,B)$. Now, let $(\gamma_Q, v_Q)$ denote the twisted action of $Q$ on $(A\otimes B)\rtimes_{\gamma_N} N$ as in \autoref{thm_decomposition}. Then, $\Theta\circ v_Q'(sN,tN) = \lambda^{\beta_N}_{c(sN)c(tN)c(stN)^{-1}} = v_Q(sN,tN)$.  Moreover, a short calculation shows that $\Theta\circ (\gamma_Q')_{sN} = (\gamma_Q)_{sN}\circ \Theta$ for all $s\in G$. Hence $\Theta$ induces an isomorphism $(A\otimes (B\rtimes_{\beta_N} N))\rtimes_{\gamma_Q', v_Q'} Q \to ((A\otimes B)\rtimes_{\gamma_N} N) \rtimes_{\gamma_Q, v_Q} Q$. The result then follows from \autoref{thm_decomposition}.
\end{proof}

We now turn our attention to twisted actions on continuous $C(X)$-algebras. Our goal is to understand the associated crossed product as a continuous field of C*-algebras whose fibers are themselves twisted crossed product C*-algebras.

\begin{definition}
Let $X$ be a compact Hausdorff space. A C*-algebra $A$ is said to be a $C(X)$-algebra if there is a non-degenerate $\ast$-homomorphism $\theta : C(X) \to Z(M(A))$, where $Z(M(A))$ denotes the center of the multiplier algebra of $A$.
\end{definition}

For a function $f\in C(X)$ and $a\in A$, we will write $fa := \theta(f)(a)$. If $Y\subset X$ is a closed subspace, let $C_0(X,Y)$ denote the ideal of functions that vanish on $Y$. Then $C_0(X,Y)A$ is a closed ideal in $A$. We write $A(Y) := A/C_0(X,Y)A$ for the corresponding quotient and $\pi_Y : A\to A(Y)$ for the quotient map. If $Y = \{x\}$ is a singleton set, then we write $I_x$ for the ideal $C_0(X,\{x\})A$, the algebra $A(x) := A(\{x\})$ is called the fiber of $A$ at $x$, and we write $\pi_x : A\to A(x)$ for the corresponding quotient map. If $a\in A$, we simply write $a(x) := \pi_x(a) \in A(x)$. For each $a\in A$, we have a map $\Gamma_a : X\to \R$ given by $x \mapsto \|a(x)\|$. This map is upper semicontinuous (by \cite[Proposition 1.2]{rieffel}). We say that $A$ is a continuous $C(X)$-algebra if $\Gamma_a$ is continuous for each $a\in A$.

\begin{definition}
Let $X$ be a compact Hausdorff space and $A$ be a separable continuous $C(X)$-algebra. A twisted action $(\alpha, u)$ of a group $G$ on $A$ is said to act fiberwise if the ideal $I_x$ is invariant under $\alpha$ for each $x\in X$.
\end{definition}

Given such a fiberwise twisted action, it is clear that there is a natural twisted action $(\alpha(x), u(x))$ of $G$ on $A(x)$. Then next theorem due to Packer and Raeburn gives us the expected theorem concerning the corresponding twisted crossed products.

\begin{theorem}\cite[Theorem 2.1]{packer_raeburn_two}\label{thm_packer_raeburn_fields}
Let $X$ be a compact Hausdorff space. Let $A$ be a separable continuous $C(X)$-algebra and $(\alpha, u)$ be a twisted action of an amenable group $G$ on $A$ acting fiberwise. Then for each $x\in X$, there is a twisted action $(\alpha(x), u(x))$ of $G$ on $A(x)$, and $A\rtimes_{\alpha,u} G$ is a continuous $C(X)$-algebra with fibers isomorphic to $A(x)\rtimes_{\alpha(x),u(x)} G$.
\end{theorem}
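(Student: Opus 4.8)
\emph{Plan.} I would show in turn that $B := A\rtimes_{\alpha,u}G$ is a $C(X)$-algebra, that its fibre at $x$ is $A(x)\rtimes_{\alpha(x),u(x)}G$, and that the fibrewise norm function is continuous; only the last step is substantial.

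\emph{Step 1: the $C(X)$-structure.} The inclusion $A\hookrightarrow M(B)$ is nondegenerate, so it extends to $M(A)\to M(B)$, and composing with $\theta$ gives a map $C(X)\to M(B)$. To see its image lies in $Z(M(B))$, I would first note that, since the fibres of $A$ separate points, the fibrewise hypothesis $\alpha_g(I_x)=I_x$ for all $x$ is equivalent to each $\alpha_g$ being $C(X)$-linear; passing to $M(A)$ and evaluating at $1_{M(A)}$ then gives $\alpha_g\circ\theta=\theta$. Applying the same reasoning to $\mathrm{Ad}\,u(g,h)=\alpha_g\alpha_h\alpha_{gh}^{-1}$ shows that $u(g,h)$ normalises each $I_x$. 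Using the covariance relation one has $\lambda^{\alpha}_g\,\theta(f)\,(\lambda^{\alpha}_g)^{\ast}=\alpha_g(\theta(f))=\theta(f)$, so $\theta(f)$ commutes with each $\lambda^{\alpha}_g$ and with $A$, hence with $B$, and hence (as $B$ is essential in $M(B)$) with all of $M(B)$. Nondegeneracy being clear, $B$ is a $C(X)$-algebra.

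\emph{Step 2: identification of the fibres.} Fix $x\in X$. By Step 1 the ideal $I_x$ is $\alpha$-invariant and normalised by $u$, so there is a quotient twisted system $(A(x),G,\alpha(x),u(x))$. A short computation using Cohen factorisation identifies the $C(X)$-ideal $C_0(X,\{x\})B$ with the ideal $I_x\rtimes_{\alpha,u}G$ of $B$. Since $G$ is amenable, reduced and full twisted crossed products coincide and crossed products preserve exactness of $0\to I_x\to A\to A(x)\to 0$, so
\[
B(x)=B/C_0(X,\{x\})B\;\cong\;A(x)\rtimes_{\alpha(x),u(x)}G ,
\]
with $b=\sum_g a_g\lambda^{\alpha}_g$ sent to $\sum_g a_g(x)\lambda^{\alpha(x)}_g$.

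\emph{Step 3: continuity.} Upper semicontinuity of $x\mapsto\|b(x)\|$ holds for every $b\in B$ by \cite[Proposition 1.2]{rieffel}, and since a uniform limit of continuous functions is continuous, it is enough to prove lower semicontinuity for $b\in C_c(G,A)$. Let $E\colon B\to A$ and $E_x\colon B(x)\to A(x)$ be the canonical conditional expectations, noting that $E_x$ is faithful because $B(x)$ is a reduced twisted crossed product. The key point is the identity
\[
\|b(x)\|^{2}\;=\;\sup_{k\geq1}\bigl\|\,E\!\left((b^{\ast}b)^{k}\right)\!(x)\,\bigr\|^{1/k}\qquad (b\in C_c(G,A)),
\]
which reads the reduced fibre norm off genuinely continuous functions of $x$. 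Here $E((b^{\ast}b)^{k})(x)=E_x\!\left((b^{\ast}b)(x)^{k}\right)$, so ``$\geq$'' is immediate from contractivity of $E_x$. For ``$\leq$'' put $c:=(b^{\ast}b)(x)\geq0$, choose a faithful state $\psi$ on the separable C*-algebra $A(x)$, and set $\tau:=\psi\circ E_x$, a faithful state on $B(x)$; a standard moment computation (the spectral measure of $c$ at the GNS cyclic vector has support equal to $\mathrm{sp}(c)$, by faithfulness of $\tau$) gives $\tau(c^{k})^{1/k}\to\|c\|=\|b(x)\|^{2}$, while $\tau(c^{k})\leq\|E_x(c^{k})\|=\|E((b^{\ast}b)^{k})(x)\|$. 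Granting the identity, each $d_k:=E((b^{\ast}b)^{k})$ lies in $A$, so $x\mapsto\|d_k(x)\|^{1/k}$ is continuous because $A$ is a \emph{continuous} $C(X)$-algebra; a supremum of continuous functions is lower semicontinuous, so $x\mapsto\|b(x)\|^{2}$ is lower semicontinuous, and the proof is complete.

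\emph{Where the difficulty lies.} Steps 1 and 2 are essentially bookkeeping once the exactness input of Step 2 is granted. The real content is the displayed identity of Step 3: it is exactly what trades the reduced-crossed-product norms on the fibres — which cannot be compared across $x$ directly — for a supremum of honestly continuous functions on $X$. Amenability of $G$ enters precisely to make $E_x$ faithful (via Step 2), and continuity of $A$ enters precisely to make each $d_k$ have continuous norm; dropping either hypothesis breaks the argument, and indeed the conclusion.
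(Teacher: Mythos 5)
The paper does not prove this statement at all: it is imported verbatim as \cite[Theorem 2.1]{packer_raeburn_two}, so there is no in-paper argument to compare yours against. Judged on its own, your proof is correct and is essentially the standard route to such results. Step 1 is right (fibrewise invariance plus the fact that $\|a\|=\sup_x\|a(x)\|$ does give $C(X)$-linearity of each $\alpha_g$, hence centrality of $\theta(f)$ in $M(B)$), and Step 2 correctly isolates where amenability is needed, namely to identify the quotient $B/(I_x\rtimes_{\alpha,u}G)$ with the \emph{reduced} twisted crossed product of the fibre via exactness and the coincidence of full and reduced crossed products (for the twisted setting one reduces to the untwisted one by the Packer--Raeburn stabilization trick). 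The heart of the matter, your Step 3 identity $\|b(x)\|^2=\sup_k\|E((b^*b)^k)(x)\|^{1/k}$, is valid: the inequality $\tau(c^k)^{1/k}\to\|c\|$ for a faithful state $\tau=\psi\circ E_x$ is exactly the right way to recover the fibre norm from the genuinely continuous functions $x\mapsto\|E((b^*b)^k)(x)\|$, and this device (due in this context to Rieffel, and used again by Exel and by Kirchberg--Wassermann for continuous bundles) is what the published proofs also rely on. Two small points you may wish to make explicit: the intertwining $E_x\circ q_x=\pi_x\circ E$ that underlies $E((b^*b)^k)(x)=E_x\bigl((b^*b)(x)^k\bigr)$, and the (trivial) case $A(x)=0$ where a faithful state does not exist; neither affects correctness.
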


The next result is something we will need during the course of our analysis. While it may be possible to state it in more generality, we only give the version that we need here.

\begin{theorem}\label{thm_stable_iso}
Let $G$ be a finite group and $(\gamma,w)$ be a twisted action of $G$ on a C*-algebra $D$. If $\sigma : G\to \Aut(C(G))$ denotes the left translation action, then
\[
(C(G,D)\rtimes_{\sigma\otimes \gamma, 1\otimes w} G)\otimes \k(\ell^2(G)) \cong D\otimes \k(\ell^2(G))\otimes \k(\ell^2(G))
\]
\end{theorem}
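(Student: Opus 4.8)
The plan is to recognize the left side as a crossed product by a Green-type imprimitivity situation. The action $\sigma : G \to \Aut(C(G))$ is the translation action on the finite homogeneous space $G$ (with $G$ acting on itself by left translation), which has a single orbit with trivial stabilizer. The classical Green imprimitivity theorem (in the fiberwise/twisted form appropriate here) says that inducing from the trivial subgroup of $G$ up to $G$ gives a Morita equivalence; concretely, for a $G$-algebra of the form $C(G) \otimes D$ with the diagonal action $\sigma \otimes \gamma$ and cocycle $1 \otimes w$ pulled back from $D$, one has
\[
(C(G,D) \rtimes_{\sigma \otimes \gamma,\, 1\otimes w} G) \cong D \otimes \k(\ell^2(G))
\]
since the twist $1\otimes w$ becomes trivial after the identification (the cocycle on the induced algebra is a coboundary, coming from the regular representation picture). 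After tensoring both sides with $\k(\ell^2(G))$ this yields exactly the claimed isomorphism with the extra $\k(\ell^2(G))$ factor on each side being cosmetic bookkeeping.

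The first concrete step I would take is to write down the isomorphism $C(G,D) \rtimes_{\sigma\otimes\gamma, 1\otimes w} G \cong D \otimes \k(\ell^2(G))$ by hand, since $G$ is finite and everything is explicit. Identify $C(G,D) = \bigoplus_{g\in G} D$ and think of the crossed product as $D \otimes (C(G) \rtimes_\sigma G)$ twisted, where $C(G)\rtimes_\sigma G \cong \k(\ell^2(G))$ (this is the standard fact that $C_0(G) \rtimes_{\mathrm{lt}} G \cong \k(\ell^2 G)$ for $G$ acting on itself by translation). The point is that the diagonal twisted action $\sigma \otimes \gamma$ with cocycle $1 \otimes w$ can be untwisted: choose the covariant representation on $\ell^2(G) \otimes H_D$ (where $D$ is faithfully represented on $H_D$) given by $\pi(F)(\delta_g \otimes \xi) = \delta_g \otimes \gamma_{g^{-1}}(F(g))\xi$ composed appropriately, and $U_s(\delta_g \otimes \xi) = \delta_{sg} \otimes w(\ldots)\xi$; a direct computation (the same kind already carried out in the proof of \autoref{lem_tensor_product_decomposition}) shows this representation identifies the twisted crossed product with $D \otimes \k(\ell^2(G))$, the cocycle contributions canceling because $w$ is pulled back from the single-fiber algebra $D$ and $\sigma$ acts freely and transitively.

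Once that core identification is in place, tensoring both sides with $\k(\ell^2(G))$ and using $\k(\ell^2(G)) \otimes \k(\ell^2(G)) \cong \k(\ell^2(G) \otimes \ell^2(G))$ (together with associativity of the minimal tensor product) gives the statement as written; the double $\k(\ell^2(G))$ on the right just reflects $D \otimes \k \otimes \k$. Alternatively, and perhaps more cleanly, I would invoke \autoref{thm_packer_raeburn_fields}: $C(G,D)$ is a continuous $C(G)$-algebra with each fiber $D$ and the action acts fiberwise (each fiber action being $(\gamma, w)$ up to conjugacy by translation), so $C(G,D)\rtimes_{\sigma\otimes\gamma,1\otimes w} G$ is a continuous $C(G)$-algebra with all fibers isomorphic to $D \rtimes_{\gamma,w} G$; since $G$ is finite, $C(G)$ is a discrete (finite) space, so this continuous field is literally $\bigoplus_{g\in G}(D\rtimes_{\gamma,w}G)$ — but this is the wrong answer, so the subtlety is that the translation action $\sigma$ permutes the fibers rather than preserving them, meaning \autoref{thm_packer_raeburn_fields} does not apply directly and one genuinely needs the imprimitivity/untwisting argument of the previous paragraph.

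The main obstacle I anticipate is bookkeeping the cocycle $1 \otimes w$ correctly through the untwisting isomorphism: one must verify that the unitary $U : G \to \U M(D\otimes\k(\ell^2 G))$ implementing the translation part, when combined with $\pi$, satisfies the twisted covariance relations $U_s U_t = \pi(1\otimes w(s,t)) U_{st}$ and $\pi((\sigma\otimes\gamma)_s(F)) = U_s \pi(F) U_s^*$, and that the resulting $\ast$-representation $\pi \times U$ is faithful with image $D \otimes \k(\ell^2(G))$. Because $G$ acts on itself freely, the matrix-unit structure on $\k(\ell^2(G))$ absorbs both the translation and the cocycle, so this is a finite, if somewhat fiddly, computation rather than a conceptual difficulty. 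The only other point to be careful about is the direction of the various tensor-factor identifications so that the final isomorphism lands on $D \otimes \k(\ell^2(G)) \otimes \k(\ell^2(G))$ exactly as stated.
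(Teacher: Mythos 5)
Your proposal is correct but takes a genuinely different route from the paper. The paper first applies the Packer--Raeburn stabilization trick to replace the twisted system $(C(G,D),G,\sigma\otimes\gamma,1\otimes w)$ by an honest (untwisted) action $\beta$ on $C(G,D)\otimes\k(\ell^2(G))$, verifies by a direct computation that $\beta$ is again of the diagonal form $\sigma\otimes\delta$ for some action $\delta$ on $D\otimes\k(\ell^2(G))$, and only then invokes Green's imprimitivity theorem; the extra $\k(\ell^2(G))$ factor in the statement is precisely the cost of stabilizing. You instead propose to prove the sharper unstabilized isomorphism $C(G,D)\rtimes_{\sigma\otimes\gamma,1\otimes w}G\cong D\otimes\k(\ell^2(G))$ by hand and then tensor with $\k(\ell^2(G))$. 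This does work: the assignment $\Psi\bigl((a\otimes\delta_g)\lambda_s\bigr)=\gamma_g^{-1}(a)\,\gamma_g^{-1}\bigl(w(s,s^{-1}g)\bigr)\otimes e_{g,s^{-1}g}$ is multiplicative and $\ast$-preserving by the cocycle identity $\gamma_r(w(s,t))w(r,st)=w(r,s)w(rs,t)$, and it maps onto $D\otimes\k(\ell^2(G))$ since $G$ is finite; the one trap is exactly the bookkeeping you flag, namely that the correction must use the inverse automorphism $\gamma_g^{-1}$ rather than $\gamma_{g^{-1}}$ (these differ by an inner perturbation in the twisted setting) together with the unitaries $\gamma_g^{-1}(w(s,s^{-1}g))$. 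Your route buys a stronger conclusion --- the paper only recovers the unstabilized isomorphism for finite-dimensional $D$, via a cancellation argument in \autoref{cor_twisted_cx_algebra} --- at the price of the explicit cocycle computation that the stabilization trick is designed to avoid. Your remark that \autoref{thm_packer_raeburn_fields} cannot be applied over the base $G$ itself, because $\sigma$ permutes rather than preserves the fibers, is also correct and is consistent with the paper, which instead fibers over the quotient $X/G$ in \autoref{cor_twisted_cx_algebra}.
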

\begin{proof}
Let $A = C(G,D)$, then $\alpha := \sigma\otimes \gamma$ and $u := 1\otimes w$ makes $(A,G,\alpha,u)$ a separable twisted dynamical system. By \cite[Theorem 3.7]{packer_raeburn_one}, there is an action $\beta : G\to \Aut(A\otimes \k(\ell^2(G)))$ such that
\[
(A\rtimes_{\alpha,u} G)\otimes \k(\ell^2(G)) \cong (A\otimes \k(\ell^2(G))\rtimes_{\beta} G).
\]
If $\beta$ were of the form $\beta = \sigma\otimes \delta$ for some action $\delta : G\to \Aut(D\otimes \k(\ell^2(G)))$, then the result would follow immediately by Green's Theorem \cite[Corollary 2.9]{green}.\\

To prove this, we follow the notation of \cite{packer_raeburn_one}. Define $v : G\to \U M(A\otimes \k)$ by $v_s := (1\otimes \lambda_s)(\text{id}\otimes M(u(s,\cdot)^{\ast}))$. Then $\beta$ is given by $\beta_s = \text{Ad}(v_s)\circ (\alpha_s\otimes \text{id})$. To understand this, we think of $A$ as faithfully represented on a Hilbert space $H$ and think of $A\otimes \k$ as faithfully represented on $H\otimes \ell^2(G)$. Then, if $a\in A, T \in \k(\ell^2(G)), h\in H$ and $\zeta \in \ell^2(G)$, we have
\begin{eqsplit}
\beta_s(a\otimes T)(h\otimes \zeta) &= (1\otimes \lambda_s)(\text{id}\otimes M(u(s,\cdot)^{\ast}))[\alpha_s(a)\otimes T](\text{id}\otimes M(u(s,\cdot)))(1\otimes \lambda_{s^{-1}})(h\otimes \zeta) \\
&= (1\otimes \lambda_s)(\text{id}\otimes M(u(s,\cdot)^{\ast}))[\alpha_s(a)\otimes T](u(s,\cdot)(h)\otimes \lambda_{s^{-1}}(\zeta)) \\
&= u(s,\cdot)^{\ast}(\alpha_s(a)(u(s,\cdot)(h)))\otimes \lambda_s\circ T \circ \lambda_{s^{-1}}(\zeta)
\end{eqsplit}
Now suppose $a = f\otimes d$ for some $f\in C(G)$ and $d\in D$. Then
\[
u(s,\cdot)^{\ast}(\alpha_s(a)(u(s,\cdot))) = \sigma_s(f)\otimes [w(s,\cdot)^{\ast}\gamma_s(d)w(s,\cdot)].
\]
Therefore, $\beta = \sigma\otimes \delta$ for some action $\delta : G\to \Aut(D\otimes \k(\ell^2(G))$, and this completes the proof.
\end{proof}

This leads us to the main result of this section. Recall that a corner of a C*-algebra $B$ is a hereditary subalgebra of the form $pBp$ for some projection $p \in M(B)$.

\begin{cor}\label{cor_twisted_cx_algebra}
Let $G$ be a finite group acting freely on a compact Hausdorff space $X$ and let $D$ be a separable C*-algebra carrying a twisted action $(\alpha,u)$ of $G$. Let $A := C(X,D)$, and let $(\gamma, v) : G\to \Aut(A)$ be the induced twisted diagonal action given by
\[
\gamma_s(f)(x) := \alpha_s(f(s^{-1}\cdot x)) \text{ and } v_{s,t}(x) := u_{s,t}.
\]
Then, $A\rtimes_{\gamma,v} G$ is a continuous $C(X/G)$-algebra each fiber of which is isomorphic to a corner of $M_{|G|^2}(D)$. Moreover, if $D$ is a finite dimensional C*-algebra then each fiber is isomorphic to $M_{|G|}(D)$.
\end{cor}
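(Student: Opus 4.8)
The plan is to realize $A\rtimes_{\gamma,v}G$ as a continuous field of C*-algebras over $X/G$ and to compute the fibers, for which the two key inputs are \autoref{thm_packer_raeburn_fields} and \autoref{thm_stable_iso}. Write $Y:=X/G$ and let $q\colon X\to Y$ be the quotient map; since $G$ is finite, $Y$ is compact Hausdorff. The $G$-invariant functions $C(Y)\cong C(X)^{G}$ form a central subalgebra of $C(X)\subseteq Z(M(A))$, so $A$ is a $C(Y)$-algebra. The fiber of $A$ over $y\in Y$ is $A(y)=A/C_{0}(X,q^{-1}(y))A\cong C(q^{-1}(y),D)$, and because $G$ acts freely the set $q^{-1}(y)$ is a single orbit with exactly $|G|$ points, so $A(y)\cong\bigoplus_{z\in q^{-1}(y)}D$ and $\|a(y)\|=\max_{z}\|a(z)\|$, which depends continuously on $y$; hence $A$ is a \emph{continuous} $C(Y)$-algebra. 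Moreover $\gamma_{s}$ permutes the orbit $q^{-1}(y)$, so the ideal $C_{0}(X,q^{-1}(y))A$ is $\gamma$-invariant: the twisted action $(\gamma,v)$ acts fiberwise over $Y$.

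Since $G$ is finite, hence amenable, \autoref{thm_packer_raeburn_fields} now applies and shows that $A\rtimes_{\gamma,v}G$ is a continuous $C(Y)$-algebra whose fiber over $y$ is $A(y)\rtimes_{\gamma(y),v(y)}G$. The next step is to identify this fiber explicitly. Fixing a base point $x\in q^{-1}(y)$, the map $s\mapsto s\cdot x$ is a $G$-equivariant homeomorphism $G\to q^{-1}(y)$, and transporting through it identifies $A(y)$ with $C(G,D)$; a direct computation (using $(\gamma(y)_{s}f)(t)=\alpha_{s}(f(s^{-1}t))$ and the fact that the cocycle $v(y)_{s,t}=u_{s,t}$ is constant on the orbit) shows that the induced twisted action becomes $(\sigma\otimes\alpha,\,1\otimes u)$, where $\sigma\colon G\to\Aut(C(G))$ is the left translation action. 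Thus every fiber is isomorphic to the single algebra $F:=C(G,D)\rtimes_{\sigma\otimes\alpha,\,1\otimes u}G$, which is precisely the crossed product appearing in \autoref{thm_stable_iso}.

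Applying \autoref{thm_stable_iso} (with $\alpha$ and $u$ in the roles of $\gamma$ and $w$) gives $F\otimes\k(\ell^{2}(G))\cong D\otimes\k(\ell^{2}(G))\otimes\k(\ell^{2}(G))$; as $G$ is finite this reads $F\otimes M_{|G|}\cong M_{|G|^{2}}(D)$. Consequently $F$ is isomorphic to a corner of $M_{|G|^{2}}(D)$: the projection $1_{M(F)}\otimes e_{11}\in M(F)\otimes M_{|G|}=M(F\otimes M_{|G|})$ cuts $F\otimes M_{|G|}$ down to $F\otimes e_{11}\cong F$, and its image under the above isomorphism is a projection $p$ with $p\,M_{|G|^{2}}(D)\,p\cong F$. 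This establishes the first assertion. If in addition $D$ is finite-dimensional, then $M_{|G|^{2}}(D)$ — and therefore its subalgebra $F$ — is finite-dimensional; writing $D=\bigoplus_{j}M_{d_{j}}$ and $F=\bigoplus_{i}M_{k_{i}}$ and comparing the multisets of block sizes on the two sides of $F\otimes M_{|G|}\cong D\otimes M_{|G|}\otimes M_{|G|}$ forces $\{|G|k_{i}\}_{i}=\{|G|^{2}d_{j}\}_{j}$, hence $\{k_{i}\}_{i}=\{|G|d_{j}\}_{j}$, so $F\cong D\otimes M_{|G|}=M_{|G|}(D)$.

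I expect the only point requiring genuine care to be the fiberwise identification in the second paragraph: one must check not merely that $(\gamma,v)$ descends to each fiber, but that the resulting twisted action is \emph{exactly} the inflated one $(\sigma\otimes\alpha,1\otimes u)$ with $\sigma$ the left translation action and the cocycle constant in the base variable, since it is this precise normal form that lets \autoref{thm_stable_iso} apply. The remaining ingredients — the continuity of $y\mapsto\|a(y)\|$, the corner description via $1\otimes e_{11}$, and the dimension count in the finite-dimensional case — are routine.
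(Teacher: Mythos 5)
Your proposal is correct and follows essentially the same route as the paper: exhibit $A$ as a continuous $C(X/G)$-algebra with fibers $C(G\cdot x,D)\cong C(G,D)$ using freeness, identify the fiberwise twisted action with $(\sigma\otimes\alpha,1\otimes u)$, and apply \autoref{thm_stable_iso} together with \autoref{thm_packer_raeburn_fields}. The extra details you supply (continuity of $y\mapsto\|a(y)\|$, the corner via $1\otimes e_{11}$, and the block-size cancellation in the finite-dimensional case) are exactly the steps the paper leaves implicit or delegates to the proof of \cite[Proposition 4.5]{gardella_hirshberg_santiago}.
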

\begin{proof}
By the proof of \cite[Proposition 4.5]{gardella_hirshberg_santiago}, $A$ is a continuous $C(X/G)$-algebra whose fiber over a point $y = \pi(x)\in X/G$ can be identified with $C(G\cdot x, D) \cong C(G,D)$, because the action on $X$ is free. Note that the action of $G$ is fiberwise. Indeed, fix $y\in X/G$, and consider the twisted action $(\gamma(y), v(y)) : G\to \Aut(A(y))$ on the corresponding fiber. Then for any $f\in C(G\cdot x,D)$,
\[
\gamma(y)_s(f)(rx) = \alpha_s(f(s^{-1}rx)) \text{ and } v(y)_{s,t}(f)(rx) := u_{s,t}f(rx).
\]
Under the isomorphism $C(G\cdot x, D)\cong C(G,D)$, this twisted action translates to the natural twisted action $(\hat{\gamma}, \hat{v}) : G\to \Aut(C(G,D))$ given by $\hat{\gamma}_s(f)(r) := \alpha_s(f(s^{-1}r))$ and $\hat{v}_{s,t}(r) := u_{s,t}$. Hence, $A(y)\rtimes_{\gamma(y),v(y)} G \cong C(G,D)\rtimes_{\hat{\gamma}, \hat{v}} G$. By \autoref{thm_stable_iso}, there is an isomorphism
\[
(C(G,D)\rtimes_{\hat{\gamma},\hat{v}} G)\otimes \k(\ell^2(G)) \cong D\otimes \k(\ell^2(G))\otimes \k(\ell^2(G)) \cong M_{|G|^2}(D).
\]
When $D$ is finite dimensional, both $C(G,D)\rtimes_{\hat{\gamma},\hat{v}} G$ and $ D\otimes \k(\ell^2(G))$ are finite dimensional. Since $\k(\ell^2(G)) \cong M_{|G|}(\C)$, it follows that
\[
C(G,D)\rtimes_{\hat{\gamma},\hat{v}} G \cong D\otimes \k(\ell^2(G)) \cong M_{|G|}(D).
\]
The result now follows from \autoref{thm_packer_raeburn_fields}.
\end{proof}

\subsection{The Topological Join}

Given two compact Hausdorff spaces $X$ and $Y$, the topological join of $X$ and $Y$ is defined as
\[
X\ast Y := ([0,1]\times X \times Y)/\sim
\]
where $\sim$ is the equivalence relation defined by $(0,x,y) \sim (0,x',y)$ and $(1,x,y)\sim (1,x,y')$ for all $x,x'\in X$ and $y,y'\in Y$. Elements of $X\ast Y$ are denoted by the symbol $[t,x,y]$ for the equivalence class containing $(t,x,y)$. \\

Given three compact Hausdorff spaces $X, Y$ and $Z$, we may also define $(X\ast Y)\ast Z$ and $X\ast (Y\ast Z)$ as above. Since all spaces are compact and Hausdorff, these two spaces are naturally homeomorphic, so the join operation is associative. Thus if $X_1, X_2, \ldots, X_n$ are compact Hausdorff spaces, then $X_1\ast X_2\ast \ldots \ast X_n$ may be defined unambiguously. If $X_i = X$ for all $1\leq i\leq n$, we denote the space $X_1\ast X_2\ast \ldots \ast X_n$ by $X^{\ast(n)}$.\\

Given two continuous maps $f:X_1\to X_2$ and $g:Y_1\to Y_2$, their join $f\ast g: X_1\ast Y_1\to X_2\ast Y_2$ is defined by $(f\ast g)([t,x_1,x_2]) := [t, f(x_1), g(x_2)]$. It is evident that $f\ast g$ is well-defined and continuous. If $f=g$, then we denote the map $f\ast f$ by $f^{\ast (2)}$. Given a continuous function $f:X\to X$, this process may be repeated inductively to define $f^{\ast (n)} : X^{\ast (n)} \to X^{\ast (n)}$. \\

Now suppose $G$ is a group and $\alpha : G\curvearrowright X$ and $\beta : G\curvearrowright Y$ are two group actions of $G$ on $X$ and $Y$ respectively. Then, there is a natural action $\alpha\ast \beta : G\curvearrowright (X\ast Y)$ given by $(\alpha\ast \beta)_g([t,x,y]) = [t,\alpha_g(x),\beta_g(y)]$. Moreover, this action is free if each individual action is free. In particular, if $\gamma : G\curvearrowright X$ is a free action, then we get an induced action $\gamma^{\ast (n)} : G\curvearrowright X^{\ast (n)}$ which is also free.


\subsection{The Profinite Completion}

An inverse system of compact topological spaces over a directed partially ordered set $(\I,\leq)$ consists of a collection $\{X_i : i \in \I\}$ of compact Hausdorff topological spaces indexed by $\I$, and a collection of continuous maps $\varphi_{i,j} : X_i \to X_j$, defined whenever $i\geq j$, such that $\varphi_{i,k}=\varphi_{j,k}\circ \varphi_{i,j}$ whenever $i,j,k\in \I$ with $i\geq j\geq k$. Moreover, we assume that $\varphi_{i,i} = \id_{X_i}$ for all $i \in \I$. The inverse limit of this system is
\[
\varprojlim (X_i, \varphi_{i,j}, \I) =\{(x_i)_{i\in \I}\in \prod_{i\in \I} X_i:x_j=\varphi_{i,j}(x_i) \text{ whenever } i \geq j\}
\]
Note that $\varprojlim (X_i,\varphi_{i,j}, \I)$ is a closed subset of $\prod_{i\in \I} X_i$ and is therefore a compact Hausdorff space. Moreover, there are maps $\pi_i : X\to X_i$ satisfying $\varphi_{i,j}\circ\pi_i = \pi_j$ whenever $i\geq j$, which satisfy the following universal property: Whenever $Y$ is a topological space and $\eta_i : Y\to X_i$ are continuous maps such that $\varphi_{i,j}\circ\eta_i = \eta_j$ whenever $i\geq j$, there is a unique continuous map $\psi : Y\to X$ such that $\pi_i \circ\psi = \eta_i$ for all $i \in \I$. When the directed system $\I$ is understood, we write $\varprojlim (X_i, \varphi_{i,j})$ for this inverse limit. \\

Note that if we begin with an inverse system of compact topological groups and group homomorphisms, then the inverse limit defined above is also a compact topological group, the maps $\pi_i$ mentioned above are continuous group homomorphisms. Similarly, if each topological space $X_i$ is equipped with an action of a fixed group $G$, and each connecting map is $G$-equivariant, then the inverse limit is also equipped with a natural $G$-action in such a way that the maps $\pi_i : X\to X_i$ are $G$-equivariant. \\

Before we proceed, we need a short lemma that will be useful to us.

\begin{lemma}\label{lem_join_limit}
Let $(X,\pi_i) = \varprojlim (X_i,f_{i,j})$ and $(Y, \rho_i) = \varprojlim (Y_i,g_{i,j})$ be two inverse limits over a common directed set. Then $(X_i\ast Y_i, f_{i,j}\ast g_{i,j})$ forms an inverse system and $\varprojlim (X_i\ast Y_i, f_{i,j}\ast g_{i,j}) \cong (X\ast Y, \pi_i\ast \rho_i)$. Moreover, if there is a $G$-action on $X_i$ and $Y_i$ and the connecting maps are $G$-equivariant, then this homeomorphism may be chosen to be $G$-equivariant.
\end{lemma}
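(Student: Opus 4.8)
The plan is to verify the inverse-system axioms for $(X_i \ast Y_i, f_{i,j}\ast g_{i,j})$, then construct the homeomorphism via the universal property of the inverse limit, and finally check $G$-equivariance by a naturality argument. First I would observe that since each $f_{i,j}$ and $g_{i,j}$ is continuous, the join $f_{i,j}\ast g_{i,j} : X_i \ast Y_i \to X_j \ast Y_j$ is continuous and well-defined (as noted in the subsection on topological joins), and $(f_{i,i}\ast g_{i,i}) = \id_{X_i\ast Y_i}$. The cocycle condition $(f_{i,k}\ast g_{i,k}) = (f_{j,k}\ast g_{j,k})\circ(f_{i,j}\ast g_{i,j})$ follows immediately by applying the compatibility $f_{i,k} = f_{j,k}\circ f_{i,j}$ and $g_{i,k} = g_{j,k}\circ g_{i,j}$ coordinatewise on a representative $[t,x,y]$. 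So $(X_i\ast Y_i, f_{i,j}\ast g_{i,j})$ is indeed an inverse system of compact Hausdorff spaces.

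Next I would build the comparison map. For each $i$, the map $\pi_i \ast \rho_i : X\ast Y \to X_i \ast Y_i$ is continuous, and these are compatible with the connecting maps: $(f_{i,j}\ast g_{i,j})\circ(\pi_i\ast\rho_i) = (f_{i,j}\circ\pi_i)\ast(g_{i,j}\circ\rho_i) = \pi_j \ast \rho_j$, using the defining relations of $(X,\pi_i)$ and $(Y,\rho_i)$ together with the fact that $(\cdot)\ast(\cdot)$ respects composition. By the universal property of $\varprojlim(X_i\ast Y_i, f_{i,j}\ast g_{i,j})$, there is a unique continuous map $\Psi : X\ast Y \to \varprojlim(X_i\ast Y_i)$ with the appropriate projections recovering $\pi_i\ast\rho_i$. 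To see $\Psi$ is a homeomorphism it suffices, since both spaces are compact Hausdorff, to produce a continuous inverse. A point of $\varprojlim(X_i\ast Y_i)$ is a compatible family $([t_i, x_i, y_i])_i$; the first task is to show the coordinate $t_i$ is eventually constant — indeed $f_{i,j}\ast g_{i,j}$ preserves the $[0,1]$-coordinate except at the collapsed ends, so either all $t_i$ agree at some common value $t\in(0,1)$, or the family lies over an endpoint. In the generic case $(x_i)_i$ and $(y_i)_i$ are themselves compatible families, hence define points $x\in X$, $y\in Y$, and we send the family to $[t,x,y]$; the two degenerate cases (where the $X$- or $Y$-coordinate is collapsed) are handled separately but analogously, using that collapsing one end of the join only remembers the other coordinate's compatible family. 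One checks this assignment is continuous and two-sided inverse to $\Psi$.

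Finally, for the equivariant refinement: if $G$ acts on each $X_i$ and $Y_i$ with $G$-equivariant connecting maps, then $X$, $Y$ carry the induced limit actions making $\pi_i,\rho_i$ equivariant, and the join action on $X_i\ast Y_i$ defined by $g\cdot[t,x_i,y_i] = [t, g\cdot x_i, g\cdot y_i]$ commutes with $f_{i,j}\ast g_{i,j}$, so $\varprojlim(X_i\ast Y_i)$ inherits a $G$-action. Both $\pi_i\ast\rho_i$ and the limit action are built coordinatewise, so $\Psi$ intertwines the two $G$-actions by the uniqueness clause of the universal property (both $g\circ\Psi$ and $\Psi\circ g$ satisfy the same defining compatibility, hence coincide); thus $\Psi$ is $G$-equivariant. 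I expect the main obstacle to be the bookkeeping in the inverse direction — specifically, arguing cleanly that the $[0,1]$-coordinate of a compatible family stabilizes and carefully treating the two boundary cases where the join has collapsed a factor, since there the naive ``take the limit in each coordinate'' recipe must be replaced by remembering only the surviving factor. Everything else is formal manipulation with the universal property.
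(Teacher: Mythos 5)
Your proposal is correct and follows essentially the same route as the paper: obtain the comparison map from the universal property, establish bijectivity by a case analysis on whether the $[0,1]$-coordinate of a compatible family lies in $(0,1)$ or at an endpoint (where only the surviving factor's compatible family matters), and deduce equivariance from the coordinatewise construction. The only cosmetic difference is that the paper checks injectivity and surjectivity directly and invokes compactness to upgrade the continuous bijection to a homeomorphism (after passing to a cofinal subsystem with a least index for bookkeeping), whereas you package the same case analysis as an explicit inverse map.
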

\begin{proof}
Let $\I$ denote the common directed set. It is easy to verify that $(X_i\ast Y_i, f_{i,j}\ast g_{i,j})$ forms an inverse system, so let $(Z, p_i) = \varprojlim (X_i\ast Y_i, f_{i,j}\ast g_{i,j})$. Note that the maps $\pi_i\ast \rho_i : X\ast Y\to X_i\ast Y_i$ satisfy $(f_{i,j}\ast g_{i,j})\circ (\pi_i\ast \rho_i) = \pi_j\ast \rho_j$ whenever $i\geq j$. Hence, by the universal property of the inverse limit, there is a continuous map $\Theta : X\ast Y \to Z$ such that $p_i\circ \Theta = \pi_i\ast \rho_i$. Identifying $Z$ as a subset of $\prod_{i\in \mathcal{I}} (X_i\ast Y_i)$, we see that
\[
\Theta([t,x,y]) = ([t, \pi_i(x), \rho_i(y)])_{i\in \mathcal{I}}.
\]
We wish to prove that $\Theta$ is a homeomorphism. Since both spaces are compact and Hausdorff, it suffices to show that $\Theta$ is bijective. For injectivity, suppose $\Theta([t,x,y]) = \Theta([s,x',y'])$, then we consider three cases:
\begin{enumerate}
\item If $0 < t < 1$, then $0 < s < 1$ and $s=t$ must follow. Moreover, $\pi_i(x) = \pi_i(x')$ and $\rho_i(y) = \rho_i(y')$ for all $i \in \mathcal{I}$. Therefore, $x=x'$ and $y=y'$ as well.
\item If $t = 0$ then $s=0$ must hold. Since $(0, \pi_i(x), \rho_i(y)) \sim (0, \pi_i(x'), \rho_i(y'))$ for all $i \in \mathcal{I}$, it follows that $\rho_i(y) = \rho_i(y')$ for all $i \in \mathcal{I}$. Therefore, $y=y'$ and $(0,x,y) \sim (0,x',y')$ holds.
\item The argument when $t=1$ is similar.
\end{enumerate}
This proves that $\Theta$ is injective. To prove that $\Theta$ is surjective, fix $i_0 \in \mathcal{I}$ and consider the subsystem $\mathcal{J} := \{j \in \mathcal{I} : i_0 \leq j\}$. Then, $\mathcal{J}$ is cofinal in $\mathcal{I}$ and therefore $\varprojlim (X_i\ast Y_i, f_{i,j}\ast g_{i,j}, \mathcal{I}) \cong \varprojlim (X_i\ast Y_i, f_{i,j}\ast g_{i,j}, \mathcal{J})$ by \cite[Lemma 1.1.9]{ribes}. Replacing $\mathcal{I}$ by $\mathcal{J}$, we may assume that $i_0\leq i$ for all $i \in \mathcal{I}$. Now fix $z = (z_i)_{i\in \mathcal{I}} \in Z$ and write $z_i = [t_i, x_i, y_i]$ for some $(t_i, x_i, y_i) \in [0,1]\times X_i\times Y_i$. We know that if $i\geq j$, then $[t_i, f_{i,j}(x_i), g_{i,j}(y_i)] = [t_j, x_j, y_j]$. We once again break it into cases:
\begin{enumerate}
\item If $0 < t_{i_0}  <1$, then $t_i = t_{i_0}$ for all $i \in \mathcal{I}$ and therefore $f_{i,j}(x_i) = x_j$ and $g_{i,j}(y_i) = y_j$ whenever $i\geq j$. Hence, $x = (x_i)_{i\in \mathcal{I}} \in X$ and $y = (y_i)_{i\in \mathcal{I}} \in Y$ and clearly $\Theta([t_{i_0}, x,y]) = z$.
\item If $t_{i_0} = 0$ then $t_i = 0$ for all $i \in \mathcal{I}$. It follows that $g_{i,j}(y_i) = y_j$ whenever $i\geq j$. So if $y = (y_i)_{i\in \mathcal{I}} \in Y$, then for any $x\in X$, we have $\Theta([0,x,y]) = z$.
\item The case when $t_{i_0} = 1$ is analogous.
\end{enumerate}
This proves that $\Theta$ is surjective and thus a homeomorphism. Note that if the $X_i$ and $Y_i$ all carry a $G$-action and all connecting maps are $G$-equivariant, then $\Theta$ is also clearly $G$-equivariant.
\end{proof}

We now recall the definition of the profinite completion of a discrete, residually finite group.

\begin{definition}
Recall that a discrete group $G$ is said to be residually finite if for each non-identity element $g\in G$, there is a subgroup $H$ of $G$ of finite index such that $g\notin H$. Given such a group $G$, let $\I_G$ denote the set of all normal subgroups of $G$ of finite index, partially ordered by reverse inclusion. In other words, $H\leq K$ if and only if $K\subset H$. Whenever $H\leq K$, there is a homomorphism $\varphi_{K,H} : G/K \to G/H$ given by $gK\mapsto gH$, and this makes the collection $\{G/H, \varphi_{K,H}\}_{\I_G}$ an inverse system of groups. The inverse limit of this system is called the profinite completion of $G$ and is denoted by $\G$. By definition,
\[
\G = \left\lbrace (g_HH)_{H\in \I_G} : g_KH = g_HH \text{ for all } H,K\in \I_G \text{ with } K\subset H\right\rbrace
\]
\end{definition}

Note that $\G$ is a group under componentwise multiplication and is a topological space as a subspace of $\prod_{H \in \I_G} G/H$. Indeed, $\G$ is compact, Hausdorff and totally disconnected. A profinite group is, by definition, the inverse limit of a surjective inverse system of finite groups, and $\G$ is therefore a profinite group. \\

For each $H\in \I_G$, there is a natural action $\beta^H : G\curvearrowright G/H$ given by $\beta^H_t(gH) := tgH$. The maps $\varphi_{K,H}$ respect these actions, so we have an induced left-translation action $\beta : G\curvearrowright \G$ given by $\beta_t((g_HH)_{H\in \I_G}) := (tg_HH)_{H\in \I_G}$. Moreover, this action is both free and minimal (in the sense that $\G$ has no non-trivial closed $\beta$-invariant subsets). Hence, for each $n \in \N$, we obtain actions $(\beta^H)^{\ast (n)} : G\curvearrowright (G/H)^{\ast (n)}$ and $\beta^{\ast (n)} : G\curvearrowright \G^{\ast (n)}$. Note that the action $\beta^{\ast (n)}$ is free but not necessarily minimal. For a fixed $n \in \N$, \autoref{lem_join_limit} now ensures that there is a natural $G$-equivariant homeomorphism
\[
\G^{\ast (n)}\cong \varprojlim ((G/H)^{\ast(n)}, \varphi_{K,H}^{\ast (n)}).
\]
Now consider the commutative C*-algebra $C(\G^{\ast (n)})$. From the above argument, it follows that $C(\G^{\ast (n)})$ is an inductive limit of the system $(C((G/H)^{\ast (n)}), (\varphi_{K,H}^{\ast (n)})^{\ast})$. For each $H \in \I_H$, let $(\sigma^H)^{(n)} : G\to \Aut(C((G/H)^{\ast (n)}))$ denote the action induced by $(\beta^H)^{\ast (n)}$ and let $\sigma^{(n)} : G\to \Aut(C(\G^{\ast (n)}))$ be the action induced by $\beta^{\ast (n)}$. The maps $(\pi_K^{\ast (n)})^{\ast} : C((G/K)^{\ast (n)})\to C(\G^{\ast (n)})$ and $(\varphi_{K,H}^{\ast (n)})^{\ast} : C((G/H)^{\ast (n)}) \to C((G/K)^{\ast (n)})$ are all $G$-equivariant and therefore we see that

\begin{cor}
For each $n \in \N$, 
\[
(C(\G^{\ast (n)}), \sigma^{(n)}) \cong \lim_{\I_G} (C((G/H)^{\ast (n)}), (\sigma^H)^{(n)})
\]
where this denotes an inductive limit of $G$-algebras.
\end{cor}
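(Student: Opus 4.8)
The plan is to obtain the corollary by applying the contravariant Gelfand functor $C(-)$ to the $G$-equivariant homeomorphism
\[
\G^{\ast(n)} \cong \varprojlim\bigl((G/H)^{\ast(n)}, \varphi_{K,H}^{\ast(n)}\bigr)
\]
established above via \autoref{lem_join_limit}, together with the standard fact that $C(-)$ converts an inverse limit of compact Hausdorff spaces into an inductive limit of unital commutative C*-algebras.

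First I would record the equivariance that is needed. The projections $\pi_H : \G \to G/H$ and the connecting maps $\varphi_{K,H} : G/K \to G/H$ are $G$-equivariant for the left-translation actions, so by functoriality of the join their $n$-fold joins $\pi_H^{\ast(n)}$ and $\varphi_{K,H}^{\ast(n)}$ are $G$-equivariant, and hence the dual $\ast$-homomorphisms $(\pi_H^{\ast(n)})^{\ast}$ and $(\varphi_{K,H}^{\ast(n)})^{\ast}$ are $G$-equivariant for the induced actions $(\sigma^H)^{(n)}$ and $\sigma^{(n)}$. In particular $\bigl(C((G/H)^{\ast(n)}), (\sigma^H)^{(n)}\bigr)_{H \in \I_G}$ together with the maps $(\varphi_{K,H}^{\ast(n)})^{\ast}$ forms an inductive system of $G$-algebras over $\I_G$.

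Next I would invoke the fact that $C(-)$ sends $\varprojlim$ to $\lim$: if $(Z, p_i) = \varprojlim(Z_i, q_{i,j})$ is an inverse limit of compact Hausdorff spaces over a directed set, then $\bigcup_i p_i^{\ast}(C(Z_i))$ is a $\ast$-subalgebra of $C(Z)$ containing the constants (the system being directed) and separating the points of $Z$ (since the $p_i$ jointly separate points), hence dense by Stone--Weierstrass; thus $C(Z) \cong \lim (C(Z_i), q_{i,j}^{\ast})$ via the maps $p_i^{\ast}$. Applying this with $Z = \G^{\ast(n)}$, $Z_H = (G/H)^{\ast(n)}$, $p_H = \pi_H^{\ast(n)}$ and $q_{K,H} = \varphi_{K,H}^{\ast(n)}$ yields a $\ast$-isomorphism $C(\G^{\ast(n)}) \cong \lim_{\I_G} C((G/H)^{\ast(n)})$ compatible with the maps $(\pi_H^{\ast(n)})^{\ast}$.

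Finally I would check that this isomorphism is $G$-equivariant. By the equivariance recorded above, $\sigma^{(n)}_t \circ (\pi_H^{\ast(n)})^{\ast} = (\pi_H^{\ast(n)})^{\ast} \circ (\sigma^H)^{(n)}_t$ for every $t \in G$ and $H \in \I_G$; since the maps $(\pi_H^{\ast(n)})^{\ast}$ have dense union, this identifies $\sigma^{(n)}$ with the inductive limit of the actions $(\sigma^H)^{(n)}$, so the isomorphism above is an isomorphism of $G$-algebras, as claimed. There is no genuine obstacle here --- the geometric content is already packaged in \autoref{lem_join_limit} --- and the only points demanding any care are the Stone--Weierstrass density argument behind ``$C(-)$ turns $\varprojlim$ into $\lim$'' and the bookkeeping that identifies $\sigma^{(n)}$ with the limit action.
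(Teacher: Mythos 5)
Your proposal is correct and follows essentially the same route as the paper: the paper also deduces the corollary directly from the $G$-equivariant homeomorphism $\G^{\ast(n)} \cong \varprojlim((G/H)^{\ast(n)}, \varphi_{K,H}^{\ast(n)})$ supplied by \autoref{lem_join_limit}, together with the standard duality converting inverse limits of compact Hausdorff spaces into inductive limits of commutative C*-algebras and the $G$-equivariance of the dual maps $(\pi_H^{\ast(n)})^{\ast}$ and $(\varphi_{K,H}^{\ast(n)})^{\ast}$. Your Stone--Weierstrass justification simply makes explicit a step the paper leaves implicit.
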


\subsection{Rokhlin Dimension}

We now turn to the definition of Rokhlin dimension for actions of residually finite groups on C*-algebras. In their original work on the subject \cite{hirshberg}, Hirshberg, Winter and Zacharias defined two notions: Rokhlin dimension with and without commuting towers. The latter is, in principle, a stronger requirement than the former and was introduced as a technical artefact to prove certain results. However, it was unclear at that time if the two definitions are necessarily distinct. For actions of compact groups, these are now known to be different in general. However, for actions of $\Z$, for instance, it is unknown whether these two notions are distinct. Since we only need the notion of Rokhlin dimension with commuting towers for our main results, that is the one we give here. Indeed, structure theorems such as the one we describe are unknown even for actions of compact groups without this assumption. \\

Given two C*-algebras $A$ and $B$, a linear map $\varphi : A\to B$ is said to be contractive and completely positive (abbreviated to c.c.p.) if the natural induced map $\varphi^{(n)} : M_n(A)\to M_n(B)$ is contractive and positive for each $n \in \N$. Two elements $a,b\in A$ are said to be orthogonal (in symbols, $a\perp b$) if $ab = ba = a^{\ast}b = b^{\ast} a = 0$. A c.c.p. map $\varphi : A\to B$ is said to have order zero if $\varphi(a)\perp \varphi(b)$ whenever $a\perp b$.

\begin{definition}\cite[Definition 5.4]{szabo_wu_zacharias}\label{defn_rok_dim}
Let $A$ be a C*-algebra, $G$ be a discrete, countable, residually finite group, $H$ be a subgroup of $G$ of finite index, and $\alpha:G\to \Aut(A)$ be an action of $G$ on $A$. We say that $\alpha$ has Rokhlin dimension $d$ with commuting towers relative to $H$ if $d$ is the least integer such that for each $F\ssubset A, M\ssubset G, S\ssubset C(G/H)$ and $\epsilon>0$, there exist $(d+1)$ c.c.p. maps 
\[
\psi_0, \psi_1, \ldots, \psi_d : C(G/H) \to A
\]
satisfying the following properties:
\begin{enumerate}
\item $[\psi_{\ell}(f),a] \approx_{\epsilon} 0$ for all $a\in F, f\in S$ and $0\leq \ell\leq d$.
\item $\psi_{\ell}(\sigma^H_g(f))a \approx_{\epsilon} \alpha_g(\psi_{\ell}(f))a$ for all $a\in F, f\in S, g\in M$ and $0\leq \ell\leq d$.
\item $\psi_{\ell}(f_1)\psi_{\ell}(f_2)a \approx_{\epsilon} 0$ for all $a\in F$ and $f_1, f_2\in S$ such that $f_1\perp f_2$ and for all $0\leq \ell\leq d$.
\item $\sum_{\ell=0}^d \psi_{\ell}(1_{C(G/H)})a \approx_{\epsilon} a$ for all $a\in F$.
\item $[\psi_k(f_1), \psi_{\ell}(f_2)]a \approx_{\epsilon} 0$ for all $f_1, f_2\in S, 0\leq k, \ell\leq d$ and all $a\in F$.
\end{enumerate}
We denote the Rokhlin dimension of $\alpha$ with commuting towers relative to $H$ by $\dr^c(\alpha, H)$. If no such integer $d$ exists, then we write $\dr^c(\alpha, H) = +\infty$. We define the Rokhlin dimension of $\alpha$ with commuting towers as
\[
\dr^c(\alpha) = \sup\{ \dr^c(\alpha, H):  H\lf G\}.
\]
When $\dr^c(\alpha) = 0$, then we say that $\alpha$ has the Rokhlin property.
\end{definition}

The version of the definition given above is somewhat unwieldy for our purposes. To remedy this, we need a fact proved in \cite{suresh_pv}. First, we need the following definition.

\begin{definition}
Given a C*-algebra $A$, let $\ell^{\infty}(\N,A)$ be the C*-algebra of all bounded sequences in $A$ and $c_0(\N,A)$ be the ideal of sequences that vanish at infinity. If $A_{\infty} := \ell^{\infty}(\N,A)/c_0(\N,A)$, then $A$ embeds in $A_{\infty}$  as the set of all constant sequences so we identify $A$ with its image in $A_{\infty}$. For a C*-subalgebra $D\subset A$, we define 
\begin{eqsplit}
A_\infty\cap D' &= \{ x\in A_{\infty} : xd=dx\text{ for all }d\in D \} \text{ and }\\
\Ann(D,A_\infty) &= \{ x\in A_{\infty} : xd=dx=0\text{ for all }d\in D\}.
\end{eqsplit}
$\Ann(D,A_{\infty})$ is an ideal in $A_{\infty}\cap D'$, so we write
\[
F(D,A) := (A_{\infty} \cap D')/\Ann(D, A_{\infty})
\]
and $\kappa_{D,A} : A_{\infty}\cap D'\to F(D,A)$ for the corresponding quotient map. When $D = A$, we write $F(A)$ for $F(A,A)$ and $\kappa_A$ for $\kappa_{A,A}$. Note that $F(D,A)$ is unital if $D$ is $\sigma$-unital.
\end{definition}

Let $G$ be a discrete group and $\alpha : G\to \Aut(A)$ be an action of $G$ on a C*-algebra $A$. If $D$ is an $\alpha$-invariant subalgebra of $A$, there is a natural induced action of $G$ on $A_{\infty}$ and on $F(D,A)$ which we denote by $\alpha_{\infty}$ and $\widetilde{\alpha}_{\infty}$ respectively. Moreover, there is a $G$-equivariant $\ast$-homomorphism $(F(D,A)\otimes_{\max} D, \widetilde{\alpha}_{\infty}\otimes \alpha\lvert_D) \to (A_{\infty}, \alpha_{\infty})$ given on elementary tensors by $\kappa_{D,A}(x)\otimes a \to x\cdot a$. Under this $\ast$-homomorphism $1_{F(D,A)}\otimes a$ is mapped to $a$ for all $a\in D$, so we think of it as a way to multiply elements of $F(D,A)$ with elements of $D$ to obtain elements of $A_{\infty}$ (in a way that respects the action of $G$). \\

With all this notation in place, we are now ready to state the theorem that connects the notion of Rokhlin dimension to that of the profinite completion. Before we state the result, recall that if $G$ is a discrete, residually finite group that is also finitely generated, then $\I_G$ is countable. Therefore, $\G$ is metrizable  and $C(\G^{\ast (n)})$ is a separable C*-algebra (see \cite[Lemma 1.3]{suresh_pv}). This fact is used in the proof of this result.

\begin{theorem}\cite[Theorem 2.10]{suresh_pv}\label{thm_dimrok_commuting_towers}
Let $A$ be a separable C*-algebra, $G$ be a finitely generated, residually finite group and $\alpha:G\to \Aut(A)$ be an action of $G$ on $A$. Then $\dr^c(\alpha) \leq d$ if and only if there is a unital, $G$-equivariant $\ast$-homomorphism
\[
\varphi : (C(\G^{\ast(d+1)}), \sigma^{(d+1)}) \to (F(A), \widetilde{\alpha}_{\infty}).
\]
Moreover, $\dim(\G^{\ast(d+1)}) \leq d$.
\end{theorem}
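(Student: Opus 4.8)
The plan is to recast \autoref{defn_rok_dim} in terms of the central sequence algebra $F(A)$, where the approximate relations of that definition become exact, and then to recognise the resulting data as a unital $G$-equivariant $\ast$-homomorphism out of a join. For the forward implication, suppose $\dr^c(\alpha)\le d$. It suffices to produce the required map relative to each normal $H\lf G$, since these are cofinal among all finite-index subgroups (every finite-index subgroup contains a normal one of finite index, and Rokhlin dimension does not increase on passing to a smaller subgroup), so their supremum already computes $\dr^c(\alpha)$; and as $\I_G$ is countable we may fix a decreasing cofinal sequence $H_1\supseteq H_2\supseteq\cdots$ in $\I_G$ and work along it. Fix one such $H$. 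Applying \autoref{defn_rok_dim} to an increasing chain of finite subsets of $A$, of $G$ and of $C(G/H)$ exhausting each, with $\epsilon\to 0$, and collecting the resulting c.c.p. maps into a single c.c.p. map into $\ell^\infty(\N,A)$, a standard diagonal argument yields c.c.p. maps $\bar\psi_0,\dots,\bar\psi_d:C(G/H)\to\ell^\infty(\N,A)$ whose images, by condition (1), commute with $A$ modulo $c_0(\N,A)$, and hence descend to c.c.p. maps $\chi^H_0,\dots,\chi^H_d:C(G/H)\to F(A)$. Conditions (3), (5), (4) and (2) then say precisely that the $\chi^H_\ell$ have order zero, commute pairwise, satisfy $\sum_{\ell=0}^{d}\chi^H_\ell(1)=1_{F(A)}$, and intertwine $\sigma^H$ with $\widetilde\alpha_\infty$. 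Since $C(G/H)=\C^{|G/H|}$, the elements $\chi^H_\ell(1)$ together with the $\chi^H_\ell(p)$, $p$ a minimal projection of $C(G/H)$, are pairwise commuting positive contractions in $F(A)$ obeying exactly the relations presenting $C((G/H)^{\ast(d+1)})$ as a universal commutative C*-algebra (the $\chi^H_\ell(1)$ serving as barycentric coordinates, the $\chi^H_\ell(p)$ as ``simplex'' generators), so by that universal property we obtain a unital $G$-equivariant $\ast$-homomorphism $\Phi_H:C((G/H)^{\ast(d+1)})\to F(A)$.

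It remains to splice the $\Phi_{H_n}$ into a single unital $G$-equivariant $\ast$-homomorphism on $C(\G^{\ast(d+1)})\cong\varinjlim_n C((G/H_n)^{\ast(d+1)})$, this identification of $G$-algebras being a consequence of \autoref{lem_join_limit}. Since the $\Phi_{H_n}$ need not intertwine the connecting maps on the nose, I would invoke the reindexing ($\epsilon$-test) properties of the sequence algebra $F(A)$: a sequence of unital $\ast$-homomorphisms from the building blocks of a separable inductive system into $F(A)$ that is coherent only up to finite sets and $\epsilon$ can be corrected into an honest inductive family, whose induced map on the limit is the desired $\varphi$. Securing genuine coherence along the whole profinite tower, rather than finitary approximations level by level, is the main obstacle of the argument.

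For the converse, suppose we are given a unital $G$-equivariant $\ast$-homomorphism $\varphi:(C(\G^{\ast(d+1)}),\sigma^{(d+1)})\to(F(A),\widetilde\alpha_\infty)$. For each normal $H\lf G$, restrict it along the unital $G$-equivariant inclusion $C((G/H)^{\ast(d+1)})\hookrightarrow C(\G^{\ast(d+1)})$ and decompose the restriction back into $d+1$ pairwise commuting, equivariant, order-zero c.c.p. maps $C(G/H)\to F(A)$ with images of the unit summing to $1_{F(A)}$. Lift these successively along $\kappa_A:A_\infty\cap A'\to F(A)$ and along the quotient $\ell^\infty(\N,A)\to A_\infty$ — both liftings being available by the Choi--Effros theorem since $C(G/H)$ is nuclear — to obtain, for each $k\in\N$, c.c.p. maps $\psi^{(k)}_0,\dots,\psi^{(k)}_d:C(G/H)\to A$. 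Since the relations underlying (1)--(5) hold exactly in $F(A)$, and $\Ann(A,A_\infty)$ is precisely what makes the ``$\cdot a$'' qualifiers disappear in the quotient, for any prescribed $F\ssubset A$, $M\ssubset G$, $S\ssubset C(G/H)$ and $\epsilon>0$ the maps $\psi^{(k)}_\ell$ satisfy (1)--(5) once $k$ is large. Hence $\dr^c(\alpha,H)\le d$ for every such $H$, that is, $\dr^c(\alpha)\le d$.

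For the dimension estimate, \autoref{lem_join_limit} gives $\G^{\ast(d+1)}\cong\varprojlim_{\I_G}(G/H)^{\ast(d+1)}$, and each $(G/H)^{\ast(d+1)}$ is the join of $d+1$ finite discrete spaces, hence a finite simplicial complex all of whose simplices have at most $d+1$ vertices and therefore dimension at most $d$. Since the covering dimension of an inverse limit of compact Hausdorff spaces of dimension $\le d$ is again at most $d$, we conclude $\dim(\G^{\ast(d+1)})\le d$.
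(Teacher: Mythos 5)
First, a point of comparison: this paper does not prove \autoref{thm_dimrok_commuting_towers} at all — it is imported verbatim from \cite[Theorem 2.10]{suresh_pv} — so the relevant benchmark is the argument given there, which follows the compact-group template of \cite{gardella_hirshberg_santiago}. Your overall strategy coincides with that one: translate the approximate relations of \autoref{defn_rok_dim} into exact relations in $F(A)$, identify $d+1$ pairwise commuting, equivariant, order-zero maps $C(G/H)\to F(A)$ whose unit images sum to $1$ with a unital equivariant morphism out of $C((G/H)^{\ast(d+1)})$, and pass to the profinite limit; prove the converse by restricting, decomposing, and lifting via Choi--Effros; and get the dimension bound from the inverse-limit description. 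The converse and the dimension estimate are essentially complete as written, and your reduction to normal subgroups (via the normal core and the monotonicity of Rokhlin towers under the equivariant inclusion $C(G/K)\hookrightarrow C(G/H)$ for $H\subseteq K$) is the right observation and is needed, since for general $G$ the subgroups in \autoref{defn_rok_dim} need not be normal.

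The one genuine soft spot — which you correctly flag but then resolve with the wrong tool — is the gluing step in the forward direction. The morphisms $\Phi_{H_n}$ are constructed independently at each level, so there is no reason for the restriction of $\Phi_{H_{n+1}}$ along $C((G/H_n)^{\ast(d+1)})\hookrightarrow C((G/H_{n+1})^{\ast(d+1)})$ to agree with $\Phi_{H_n}$ even approximately on a finite set; consequently the statement you invoke (``a family coherent up to finite sets and $\epsilon$ can be corrected to an honest inductive family'') has no approximately coherent family to act on. What actually closes the argument is different: for every $F\ssubset C(\G^{\ast(d+1)})$, $M\ssubset G$ and $\epsilon>0$, produce a single c.p.c. map defined on all of $C(\G^{\ast(d+1)})$ that is $\epsilon$-multiplicative, $\epsilon$-equivariant and $\epsilon$-unital on $(F,M)$ — take $n$ large enough that $F$ is $\epsilon$-contained in the image of $C((G/H_n)^{\ast(d+1)})$ and precompose $\Phi_{H_n}$ with a c.p.c. approximate left inverse of the inclusion — and then apply Kirchberg's $\epsilon$-test (equivalently, a diagonal sequence argument exploiting the quotient structure of $F(A)$) to convert this net of approximate morphisms into an exact unital equivariant one. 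This is where the separability of $A$ and of $C(\G^{\ast(d+1)})$, hence the hypothesis that $G$ is finitely generated so that $\I_G$ is countable, is genuinely used. With that replacement your proof matches the one in the cited source.
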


The above theorem allows us to analyze the crossed product C*-algebra along the lines of \cite[Section 4]{gardella_hirshberg_santiago}. Suppose $\alpha : G\to \Aut(A)$ is an action with $d := \dr^c(\alpha) < \infty$. Set $Y := \G^{\ast (d+1)}$ and let $C(Y,A)$ be equipped with the diagonal action $\gamma : G\to \Aut(C(Y,A))$ given by $\gamma_g(f\otimes a) := \sigma^{(d+1)}_g(f)\otimes \alpha_g(a)$. Then, the inclusion map $\eta : A\to C(Y,A)$ (as constant functions) is a $G$-equivariant map. As it turns out, this map is the key to understanding the crossed product.

\begin{definition}\cite[Definition 3.3]{barlak_szabo}
Let $G$ be a discrete group and let $(A,G,\alpha)$ and $(B,G,\beta)$ be two separable C*-dynamical systems. A $G$-equivariant $\ast$-homomorphism $\varphi : A\to B$ is said to be ($G$-equivariantly) sequentially split if there is a commutative diagram of the form
\[
\xymatrix{
(A,\alpha)\ar[rr]^{\iota}\ar[rd]_{\varphi} && (A_{\infty}, \alpha_{\infty}) \\
& (B,\beta)\ar[ru]
}
\]
where the horizontal map is the natural inclusion.
\end{definition}

We now obtain a direct analogue of \cite[Proposition 4.11]{gardella_hirshberg_santiago}.

\begin{prop}\label{prop_sequentially_split_crossed_product}
If $d := \dr^c(\alpha) < \infty$, then the inclusion map $\eta : A \to C(\G^{\ast (d+1)},A)$ induces a sequentially split $\ast$-homomorphism $\overline{\eta} : A\rtimes_{\alpha} G\to C(\G^{\ast (d+1)},A)\rtimes_{\sigma^{(d+1)}\otimes \alpha} G$.
\end{prop}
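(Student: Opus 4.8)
The plan is to realise $\overline{\eta}$ as the reduced crossed product of the $G$-equivariant inclusion $\eta$, to use \autoref{thm_dimrok_commuting_towers} to show that $\eta$ itself is $G$-equivariantly sequentially split, and then to transport the splitting through the crossed product functor. Write $Y := \G^{\ast(d+1)}$ and $\gamma := \sigma^{(d+1)}\otimes\alpha$. First, since $\gamma_g(\eta(a)) = \sigma^{(d+1)}_g(1)\otimes\alpha_g(a) = 1\otimes\alpha_g(a) = \eta(\alpha_g(a))$, the map $\eta$ is $G$-equivariant, so it induces a $\ast$-homomorphism $\overline{\eta} : A\rtimes_{\alpha} G \to C(Y,A)\rtimes_{\gamma} G$ determined by $a\lambda^{\alpha}_s\mapsto \eta(a)\lambda^{\gamma}_s$. (That the reduced crossed product is functorial for equivariant $\ast$-homomorphisms follows by comparing the associated regular covariant representations: for $B\subseteq\mathcal{B}(K)$ faithful, the regular covariant representation of $(A,G,\alpha)$ built from $A\xrightarrow{\varphi}B\subseteq\mathcal{B}(K)$ factors through that of $(B,G,\beta)$ on $K\otimes\ell^2(G)$.)

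Next I would construct a splitting of $\eta$. By \autoref{thm_dimrok_commuting_towers} (applicable since $G$ is finitely generated residually finite and $d=\dr^c(\alpha)<\infty$), there is a \emph{unital} $G$-equivariant $\ast$-homomorphism $\varphi : (C(Y),\sigma^{(d+1)}) \to (F(A),\widetilde{\alpha}_\infty)$; note $F(A)$ is unital because $A$ is $\sigma$-unital. As $C(Y)$ is nuclear we may identify $C(Y,A)=C(Y)\otimes_{\max}A$ with action $\gamma=\sigma^{(d+1)}\otimes\alpha$, so $\varphi\otimes\id_A$ is a $G$-equivariant $\ast$-homomorphism $(C(Y,A),\gamma)\to (F(A)\otimes_{\max}A,\widetilde{\alpha}_\infty\otimes\alpha)$. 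Composing with the canonical $G$-equivariant multiplication $\ast$-homomorphism $F(A)\otimes_{\max}A\to A_\infty$ recalled in the preliminaries yields a $G$-equivariant $\ast$-homomorphism $\Phi : (C(Y,A),\gamma)\to (A_\infty,\alpha_\infty)$. Since $\varphi$ is unital, $\Phi(\eta(a)) = \Phi(1\otimes a) = 1_{F(A)}\cdot a = a$ for all $a\in A$, i.e. $\Phi\circ\eta$ is the canonical inclusion $A\hookrightarrow A_\infty$. Thus $\eta$ is $G$-equivariantly sequentially split.

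Finally I would transport this to the crossed products. Applying the reduced crossed product functor to $\Phi$ and composing with the canonical $\ast$-homomorphism $\nu : A_\infty\rtimes_{\alpha_\infty} G\to (A\rtimes_{\alpha} G)_\infty$ — which on $C_c(G,A_\infty)$ sends $[(a^{(k)})_k]\lambda_s$ to the class of $(a^{(k)}\lambda^{\alpha}_s)_k$ — produces $\mu := \nu\circ(\Phi\rtimes G) : C(Y,A)\rtimes_{\gamma} G\to (A\rtimes_{\alpha} G)_\infty$. On the dense subalgebra spanned by the $a\lambda^{\alpha}_s$ one has $\mu(\overline{\eta}(a\lambda^{\alpha}_s)) = \nu(\eta(a)\lambda^{\gamma}_s)$, which by construction of $\Phi$ and $\nu$ is the class of the constant sequence $(a\lambda^{\alpha}_s)_k$, i.e. the image of $a\lambda^{\alpha}_s$ under the canonical inclusion $A\rtimes_{\alpha} G\hookrightarrow (A\rtimes_{\alpha}G)_\infty$; hence $\mu\circ\overline{\eta}$ equals this inclusion, so $\overline{\eta}$ is sequentially split. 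Equivalently, once $\eta$ is known to be $G$-equivariantly sequentially split, the conclusion is immediate from the fact that equivariant sequential splitness passes to reduced crossed products (cf. \cite{barlak_szabo} and \cite[Proposition 4.11]{gardella_hirshberg_santiago}).

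The main obstacle is really just the one nontrivial input above: the existence of $\nu$. It is evidently a well-defined $\ast$-homomorphism on $C_c(G,A_\infty)$, but extending it to the reduced crossed product requires checking that it does not increase the reduced norm. I would do this by factoring through the ultrapowers $A^{\omega}$ over free ultrafilters $\omega$ and using that the regular representation of $A^{\omega}\rtimes_{\alpha^{\omega}} G$ on the ultrapower Hilbert space is faithful, together with the identity $\sup_{\omega}\lim_{\omega}(c_k)=\limsup_k(c_k)$ for bounded sequences; this is also the step where separability of $A$ (hence of $C(Y,A)$) is used. Everything else in the argument is formal.
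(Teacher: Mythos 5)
Your argument is correct and follows essentially the same route as the paper: obtain the unital $G$-equivariant $\ast$-homomorphism $C(\G^{\ast(d+1)})\to F(A)$ from \autoref{thm_dimrok_commuting_towers}, compose with the multiplication map $F(A)\otimes A\to A_\infty$ to see that $\eta$ is $G$-equivariantly sequentially split, and then pass to crossed products via \cite[Proposition 3.11]{barlak_szabo}. Your final paragraph merely unpacks the content of that cited proposition (the existence of the map $A_\infty\rtimes_{\alpha_\infty}G\to (A\rtimes_\alpha G)_\infty$), which the paper simply invokes as a black box.
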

\begin{proof}
There is a map $F(A)\otimes A \to A_{\infty}$ given on elementary tensors by $(a+\Ann(A_{\infty},A))\otimes b \mapsto ab$.  Also, this is $G$-equivariant with respect to the diagonal action $\alpha_{\infty}\otimes \alpha$ on $F(A)\otimes A$. By \autoref{thm_dimrok_commuting_towers}, there is a $G$-equivariant unital $\ast$-homomorphism $\varphi : C(Y)\to F(A)$, which induces a $G$-equivariant $\ast$-homomorphism $\varphi : C(Y)\otimes A\to F(A)\otimes A$. Composing with the previous map, we get a $G$-equivariant $\ast$-homomorphism
\[
\psi : (C(Y)\otimes A, \gamma) \to (A_{\infty},\alpha_{\infty}).
\]
If $\eta : A\to C(Y)\otimes A$ denotes the natural inclusion, then it is clear that $\psi\circ \eta(a) = \iota(a)$ for all $a\in A$. Thus, $\eta$ is $G$-equivariantly sequentially split. By \cite[Proposition 3.11]{barlak_szabo}, the induced map $\overline{\eta} : A\rtimes_{\alpha} G\to C(Y,A)\rtimes_{\gamma} G$ is sequentially split.
\end{proof}

We now wish to have a slight refinement of \autoref{prop_sequentially_split_crossed_product}, which necessitates the introduction of a new notion, that of a dominating regular approximation from \cite{szabo_wu_zacharias}. Let $G$ be a finitely generated abelian group. A decreasing sequence $\tau := (H_n)$ of finite index subgroups of $G$ is said to be a regular approximation if for each $g\in G$, there exists $n \in \N$ such that $g\notin H_n$ (Note that the original definition reduces to this simpler one for abelian groups). Moreover, $\tau$ is said to be dominating if, for every $H \lf G$, there exists $n \in \N$ such that $H_n\subset H$. \\

Now, suppose $(A,G,\alpha)$ is a C*-dynamical system and $\tau = (H_n)$ is a dominating regular approximation. If $\I_G$ denotes the directed set of all finite index subgroups of $G$, then $\tau$ can be thought of as a cofinal subset of $\I_G$. If $\G_{\tau} := \varprojlim (G/H, \varphi_{K,H}, \tau)$, then by \cite[Lemma 1.1.9]{ribes}, there is a homeomorphism $\G \to \G_{\tau}$. Moreover, this homeomorphism is easily seen to be $G$-equivariant. Therefore, by \autoref{lem_join_limit}, we conclude that $C(\G^{\ast (n)})$ is isomorphic to $C(\G_{\tau}^{\ast (n)})$ and that the isomorphism is $G$-equivariant (For simplicity, we denote the action on both algebras by $\sigma^{(n)}$). We thus obtain the following consequence of \autoref{prop_sequentially_split_crossed_product}.

\begin{prop}\label{prop_sequentially_split_crossed_product_dominating}
Let $(A,G,\alpha)$ be a C*-dynamical system with $G$ finitely generated and abelian. If $d := \dr^c(\alpha) < \infty$ and $\tau$ is a dominating regular approximation in $G$, then the inclusion map $\eta : (A,\alpha) \to (C(\G_{\tau}^{\ast (d+1)},A), \sigma^{(d+1)}\otimes \alpha)$ induces a sequentially split $\ast$-homomorphism
\[
\overline{\eta} : A\rtimes_{\alpha} G\to C(\G_{\tau}^{\ast (d+1)},A)\rtimes_{\sigma^{(d+1)}\otimes \alpha} G.
\]
\end{prop}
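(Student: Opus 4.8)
The plan is to deduce this from \autoref{prop_sequentially_split_crossed_product} together with the $G$-equivariant identification of the profinite completion with its cofinal version. First I would invoke the discussion immediately preceding the statement: since $G$ is finitely generated abelian and $\tau = (H_n)$ is a dominating regular approximation, $\tau$ is a cofinal subset of the directed set $\I_G$ of finite index subgroups, so \cite[Lemma 1.1.9]{ribes} supplies a homeomorphism $\G \to \G_\tau$, which one checks is $G$-equivariant because the connecting maps $\varphi_{K,H}$ are $G$-equivariant and the limit construction is natural. Applying \autoref{lem_join_limit} (with $X_i = Y_i$ the spaces $G/H$), this lifts to a $G$-equivariant homeomorphism $\G^{\ast(d+1)} \cong \G_\tau^{\ast(d+1)}$, hence a $G$-equivariant $\ast$-isomorphism $C(\G^{\ast(d+1)}) \cong C(\G_\tau^{\ast(d+1)})$, intertwining $\sigma^{(d+1)}$ on both sides.

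Next I would tensor this isomorphism with $\id_A$ to obtain a $G$-equivariant $\ast$-isomorphism $\Phi : (C(\G^{\ast(d+1)},A), \sigma^{(d+1)}\otimes\alpha) \to (C(\G_\tau^{\ast(d+1)},A), \sigma^{(d+1)}\otimes\alpha)$, and note that it commutes with the two inclusions of $A$ as constant functions, i.e. $\Phi\circ\eta = \eta$. Since a $G$-equivariant $\ast$-isomorphism is trivially $G$-equivariantly sequentially split (take the inverse, composed with the natural inclusion, as the splitting map), and since \autoref{prop_sequentially_split_crossed_product} gives that $\eta : A \to C(\G^{\ast(d+1)},A)$ is $G$-equivariantly sequentially split, the composition $\Phi\circ\eta = \eta : A \to C(\G_\tau^{\ast(d+1)},A)$ is again $G$-equivariantly sequentially split — one simply composes the splitting diagram for the first $\eta$ with $\Phi^{-1}$ on the appropriate leg, or observes directly that a composite of sequentially split maps is sequentially split (as in \cite{barlak_szabo}). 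Finally, by \cite[Proposition 3.11]{barlak_szabo}, the induced map $\overline{\eta} : A\rtimes_\alpha G \to C(\G_\tau^{\ast(d+1)},A)\rtimes_{\sigma^{(d+1)}\otimes\alpha} G$ on crossed products is sequentially split, which is the claim.

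I do not expect a genuine obstacle here; the statement is essentially a bookkeeping refinement of \autoref{prop_sequentially_split_crossed_product}. The one point requiring a little care is the verification that the homeomorphism $\G \to \G_\tau$ of \cite[Lemma 1.1.9]{ribes} is $G$-equivariant and that the passage through \autoref{lem_join_limit} and the tensoring with $\id_A$ all preserve equivariance and intertwine the actions as named — but this is routine, and indeed the paragraph preceding the statement already asserts it, so in the write-up this can be stated briefly rather than belabored.
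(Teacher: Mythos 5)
Your argument is exactly the paper's: the paper proves this proposition by the paragraph immediately preceding it (cofinality of $\tau$ in $\I_G$ giving a $G$-equivariant homeomorphism $\G\to\G_\tau$ via \cite[Lemma 1.1.9]{ribes}, lifted through \autoref{lem_join_limit} to the joins) and then simply cites \autoref{prop_sequentially_split_crossed_product}. Your write-up is correct and, if anything, spells out the bookkeeping (tensoring with $\id_A$, compatibility with the inclusions, invariance of sequential splitness under equivariant isomorphism) more explicitly than the paper does.
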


We end this section with a short remark concerning sequentially split $\ast$-homomorphisms that will be useful to us later on.

\begin{rem}\label{rem_sequentially_split_inheritance}
If $\varphi : A\to B$ is a sequentially split $\ast$-homomorphism between two separable C*-algebras, then \cite[Theorem 2.9]{barlak_szabo} states that a number of properties of $B$ are inherited by $A$. We list two that we will need later:
\begin{enumerate}
\item If $B$ is an A$\T$-algebra, then so is $A$.
\item If $B$ is simple and has stable rank one, then $A$ is simple and has stable rank one.
\end{enumerate} 
\end{rem}

\section{Main Results}\label{sec_main}

We now consider actions on separable AF-algebras. Given such an algebra $A$, a sequence $(A_n)$ of finite dimensional C*-subalgebras is called a generating nest if it is an increasing sequence whose union is dense in $A$.

\begin{definition}
Let $A$ be a separable AF-algebra. An action $\alpha : G\to \Aut(A)$ is said to be an inductive limit action if there is a generating nest of finite dimensional $G$-invariant C*-subalgebras of $A$.
\end{definition}

Recall that a C*-dynamical system $(A,G,\alpha)$ is said to be unitary (or unitarily implemented) if there is a homomorphism $u:G\to \U M(A)$ such that $\alpha_s(a)=u_sau_s^*$ for all $a\in A$ and $s\in G$. In other words, $(A,G,\alpha)$ is unitary if and only if it is exterior equivalent to the trivial system. \\

Henceforth, let $G$ be a finitely generated abelian group and let $\mathcal{H}_G$ denote the set of all finite index, free subgroups of $G$. 

\begin{lemma}\label{lem_inner_action_finite_dimensional}
Let $G$ be a finitely generated abelian group, $D$ be a finite dimensional C*-algebra and $\alpha : G\to \Aut(D)$ be an action. Then, there exists $H\in \mathcal{H}_G$ such that $\alpha_H : H\to \Aut(D)$ is unitary.
\end{lemma}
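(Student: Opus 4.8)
The plan is to pass to finite-index subgroups until $G$ is free abelian and every $\alpha_g$ fixes each minimal central projection of $D$, and then to remove the residual projective obstruction by replacing $G$ with a subgroup of the form $mG$. Write $D=\bigoplus_{i=1}^{\ell}M_{n_i}(\C)$ with minimal central projections $p_1,\dots,p_\ell$, so $D$ is unital and $\U M(D)=\U(D)=\prod_{i=1}^{\ell}\U(M_{n_i})$. Since $G$ is finitely generated abelian, it contains a free abelian subgroup $G_0$ of finite index, and every finite-index subgroup of $G_0$ is again free and of finite index in $G$; so it is enough to find the required $H$ inside $G_0$, and we may assume $G\cong\Z^r$. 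Each $\alpha_g$ permutes $p_1,\dots,p_\ell$, giving a homomorphism of $G$ into a finite group; its kernel $K$ is a free finite-index subgroup of $G$, and for $g\in K$ the automorphism $\alpha_g$ fixes every $p_i$ and hence restricts to an automorphism of $p_iDp_i\cong M_{n_i}$, which is inner by Skolem--Noether. Replacing $G$ by $K$, I may assume in addition that for each $g\in G$ and each $i$ the automorphism $\alpha_g|_{p_iDp_i}$ is implemented by a unitary in $M_{n_i}$.

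Fix a basis $g_1,\dots,g_r$ of $G\cong\Z^r$, and for each $i,j$ choose $w^{(i)}_j\in\U(M_{n_i})$ with $\alpha_{g_j}|_{p_iDp_i}=\Ad(w^{(i)}_j)$. Since $G$ is abelian, $\Ad(w^{(i)}_jw^{(i)}_k)$ and $\Ad(w^{(i)}_kw^{(i)}_j)$ coincide on $M_{n_i}$, so $w^{(i)}_jw^{(i)}_k=\lambda^{(i)}_{jk}w^{(i)}_kw^{(i)}_j$ for a scalar $\lambda^{(i)}_{jk}$ of modulus $1$; taking determinants gives $(\lambda^{(i)}_{jk})^{n_i}=1$. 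Let $m$ be the least common multiple of $n_1,\dots,n_\ell$ and set $H:=mG=\bigoplus_{j=1}^{r}\Z(mg_j)$, a free subgroup of finite index $m^r$ in $G$ (hence of finite index in the original group). Then $(\lambda^{(i)}_{jk})^m=1$, so
\[
(w^{(i)}_j)^m\,w^{(i)}_k=(\lambda^{(i)}_{jk})^m\,w^{(i)}_k\,(w^{(i)}_j)^m=w^{(i)}_k\,(w^{(i)}_j)^m,
\]
and therefore $(w^{(i)}_j)^m$ commutes with $w^{(i)}_k$, hence with $(w^{(i)}_k)^m$, for all $j,k$.

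To conclude, I would define $v^{(i)}\colon H\to\U(M_{n_i})$ on the basis by $v^{(i)}(mg_j):=(w^{(i)}_j)^m$; since $H\cong\Z^r$ and these unitaries pairwise commute this extends to a homomorphism, and $\Ad(v^{(i)}(h))=\alpha_h|_{p_iDp_i}$ for every $h\in H$ because $\Ad$ is multiplicative and the $\alpha_{g_j}$ commute. Then $u:=\bigoplus_{i=1}^{\ell}v^{(i)}\colon H\to\U M(D)$ is a homomorphism with $\alpha_h=\Ad(u_h)$ for all $h\in H$, so $(D,H,\alpha|_H)$ is unitary, and $H$ is a free finite-index subgroup of $G$, i.e.\ $H\in\mathcal{H}_G$. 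The only real content is that mere innerness of the individual $\alpha_g$ does not suffice: one needs a \emph{group} homomorphism into the unitaries, and the obstruction to splicing the implementing unitaries together is the $2$-cocycle $(\lambda^{(i)}_{jk})$; the argument succeeds because in finite dimensions these scalars are roots of unity of bounded order and so are annihilated upon passing from $G$ to $mG$ — the remaining ingredients (Skolem--Noether, and the existence of free finite-index subgroups in finitely generated abelian groups) being routine.
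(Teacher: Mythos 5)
Your proof is correct and follows essentially the same route as the paper: reduce to $\Z^r$, arrange that the action preserves each matrix summand, observe via determinants that the scalar measuring non-commutativity of the implementing unitaries is a root of unity of order dividing the matrix size, and pass to $mG$ so that suitable powers of the unitaries commute and assemble into a homomorphism. The only (cosmetic) differences are that you kill the permutation of the central summands by passing to the kernel of the induced map into $S_\ell$ rather than raising to a power, and you treat all summands at once with $m=\mathrm{lcm}(n_1,\dots,n_\ell)$ instead of handling each summand separately.
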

\begin{proof}
We may assume without loss of generality that $G$ is infinite. First assume $G = \Z^r$ and $D=M_k(\C)$ for some $r\geq 1$ and $k\geq 1$. Write $\{e_1, e_2, \ldots, e_r\}$ denote the standard basis for $\Z^r$ as a $\Z$-module. Let $\alpha^i := \alpha_{e_i}$. Then, there exists $u_i\in \U(D)$ such that $\alpha^i = \text{Ad}(u_i)$. Fix $1\leq i,j\leq r$. Since $\alpha^i\circ \alpha^j = \alpha^j\circ \alpha^i$, it follows that $u_iu_ju_i^{-1}u_j^{-1} \in Z(D)$, the center of $D$. Therefore, there exists $\lambda \in \T$ such that $u_iu_ju_i^{-1}u_j^{-1} = \lambda 1_D$. Taking determinants, we see that $\lambda^k = 1$, so $\lambda = e^{2\pi i \ell/k}$ for some $0 \leq \ell \leq k-1$. Now, $(\alpha^i)^k = \text{Ad}(u_i^k)$ and $u_iu_j = \lambda u_ju_i$, so 
\[
u_i^ku_j^k = \lambda^{k^2}u_j^ku_i^k = u_j^ku_i^k.
\]
Therefore, the tuple $(u_1^k,u_2^k, \ldots, u_r^k)$ are tuple of mutually commuting unitaries in $D$. If $H := \prod_{i=1}^r k\Z$, then this defines a homomorphism $w : H\to \U (D)$ such that $\alpha_t = \text{Ad}(w_t)$ for all $t\in H$. \\

Now suppose $D$ is an arbitrary finite dimensional C*-algebra written as $D = \bigoplus_{j=1}^{\ell} D_j$, where each $D_j$ is a full matrix algebra. Then there exists $s \in \N$ such that each $(\alpha^i)^s$ preserves each summand of $D$. We may therefore express $(\alpha^i)^s$ as
\[
(\alpha^i)^s = \text{Ad}((v_{i,1}, v_{i,2}, \ldots, v_{i,\ell}))
\]
for some unitaries $v_{i,j} \in \U(D_j)$. Now fix $1\leq j\leq \ell$ and consider $\{v_{1,j}, v_{2,j}, \ldots, v_{r,j}\} \subset \U(D_j)$. By the first part of the proof, there exists $N_j \in \N$ such that $\{v_{1,j}^{N_j}, v_{2,j}^{N_j}, \ldots, v_{r,j}^{N_j}\}$ form a commuting family of unitaries. If $N := s\times \mathrm{lcm}(N_1, N_2, \ldots, N_{\ell})$, then we obtain a commuting family of unitaries $\{w_1, w_2,\ldots, w_r\}\subset \U(D)$ such that $(\alpha^i)^N = \text{Ad}(w_i)$ for all $1\leq i\leq r$. If $H := \prod_{i=1}^r N\Z$, this defines a homomorphism $w : H\to \U(D)$ such that $\alpha_t = \text{Ad}(w_t)$ for all $t\in H$. \\

Now suppose $G$ is an arbitrary finitely generated abelian group, then we may write $G = \Z^r\oplus K$ for some $r\geq 0$ and $K$ finite. Let $\alpha_1 : \Z^r\to \Aut(D)$ and $\alpha_2 : K\to \Aut(D)$ be the induced actions of $\Z^r$ and $K$ respectively. By the second part of the proof, there is a finite index subgroup $H_1 < \Z^r$ and a homomorphism $w_1 : H_1\to \U(D)$ such that $(\alpha_1)_t = \text{Ad}(w_1(t))$ for all $t\in H_1$. Let $H := H_1\times \{e\} \in \mathcal{H}_G$ and $w:H\to \U(D)$ be given by $w(t,e) := w_1(t)$. Then, $w$ is a homomorphism and for all $(t,e) \in H$, $\alpha_{(t,e)} = (\alpha_1)_t = \text{Ad}(w_1(t)) = \text{Ad}(w(t,e))$.
\end{proof}

\begin{lemma}\label{lem_inductive_limit_regular_approximation}
Let $G$ be a finitely generated abelian group, and $\alpha : G\to \Aut(A)$ be an inductive limit action of $G$ on an AF-algebra $A$ with Rokhlin dimension $d$, and let $(R_n)$ be a generating nest of $G$-invariant finite dimensional subalgebras. Write $\alpha^{(n)} : G\to \Aut(R_n)$ for the induced action of $G$ on $R_n$. Then, there is a dominating regular approximation $\tau = (H_k) \subset \mathcal{H}_G$ such that for each $n \in \N$, $\alpha^{(n)}\lvert_{H_k} : H_k \to \Aut(R_n)$ is unitary for all $k\geq n$.
\end{lemma}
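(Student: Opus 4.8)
The plan is to construct $\tau=(H_k)$ recursively by a diagonal argument: at stage $k$ I intersect the previous term with the first $k$ members of an enumeration of $\I_G$ (to force the dominating property) and with finitely many subgroups on which the relevant actions become unitary (supplied by \autoref{lem_inner_action_finite_dimensional}), and then shrink to a free finite index subgroup inside this intersection.

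First I would record two elementary facts. (i) If $\beta:G\to\Aut(D)$ is a unitary action and $L\leq G$, then $\beta|_L$ is again unitary: one simply restricts the implementing homomorphism $u:G\to\U M(D)$ to $L$. In particular, if $\alpha^{(n)}|_L$ is unitary for some $L$, then $\alpha^{(n)}|_{L'}$ is unitary for every subgroup $L'\subseteq L$. (ii) Writing $G=\Z^r\oplus F$ with $F$ finite, every finite index subgroup $K\leq G$ contains a member of $\mathcal{H}_G$: indeed $K\cap(\Z^r\oplus 0)$ has finite index in $\Z^r\oplus 0$, hence contains $m\Z^r\oplus 0$ for some $m\geq 1$, and $m\Z^r\oplus 0$ is a free subgroup of $G$ of finite index contained in $K$. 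In particular $\I_G$ is countable, so I can enumerate it as $K_1,K_2,\dots$.

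For each $n$, applying \autoref{lem_inner_action_finite_dimensional} to $\alpha^{(n)}:G\to\Aut(R_n)$ produces some $L_n\in\mathcal{H}_G$ with $\alpha^{(n)}|_{L_n}$ unitary; I fix one such $L_n$ for each $n$. Then I define $H_k$ inductively: set $H_0:=G$, and given $H_{k-1}$, use fact (ii) to choose $H_k\in\mathcal{H}_G$ with $H_k\subseteq H_{k-1}\cap K_k\cap L_1\cap\cdots\cap L_k$ (the intersection has finite index in $G$, so such an $H_k$ exists). By construction $(H_k)$ is a decreasing sequence of free finite index subgroups; for every $n\leq k$ we have $H_k\subseteq L_n$, so $\alpha^{(n)}|_{H_k}$ is unitary by (i) — which is precisely the stated conclusion for all $k\geq n$; and $H_k\subseteq K_k$. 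The last inclusion gives the dominating property (any $H\lf G$ equals $K_j$ for some $j$, so $H_j\subseteq K_j=H$), and this, combined with residual finiteness of $G$, gives the regular approximation property (if $g\neq e$, pick $H\lf G$ with $g\notin H$, then some $H_k\subseteq H$, so $g\notin H_k$).

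I do not expect a serious obstacle; the argument is essentially bookkeeping. The only point that uses genuine structure rather than soft reasoning is fact (ii) — that the free finite index subgroups are cofinal in $\I_G$ — since this is what lets the recursion keep each $H_k$ inside $\mathcal{H}_G$ while simultaneously arranging the dominating condition and the unitarity of $\alpha^{(n)}|_{H_k}$ for all finitely many relevant $n$. (Note the hypothesis on the Rokhlin dimension of $\alpha$ plays no role in this lemma and is recorded only for consistency with its later use.)
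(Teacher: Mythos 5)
Your proof is correct and follows essentially the same strategy as the paper's: an inductive intersection of finite index subgroups, using \autoref{lem_inner_action_finite_dimensional} at each stage to guarantee unitarity on $R_n$ and a cofinal sequence in $\I_G$ to guarantee the dominating property. The only cosmetic differences are that the paper starts from a pre-existing dominating regular approximation $(L_k)$ (citing Szab\'o--Wu--Zacharias) rather than enumerating $\I_G$ directly, and it observes that the intersection is automatically free (being a finite index subgroup of a free abelian group), so your extra shrinking step via fact (ii) is harmless but not needed.
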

\begin{proof}
We construct the sequence $(H_k)$ inductively. First choose a sequence $(L_k)$ which forms a dominating regular approximation (which exists by \cite[Proposition 3.20]{szabo_wu_zacharias}). Now for $R_1$, we apply \autoref{lem_inner_action_finite_dimensional} to obtain a finite index subgroup $G_1 \in \mathcal{H}_G$ such that $\alpha^{(1)}\lvert_{G_1} : G_1 \to \Aut(R_1)$ is unitary. Set $H_1 := L_1\cap G_1$, so that $\alpha^{(1)}\lvert_{H_1} : H_1 \to \Aut(R_1)$ is unitary. Since $G_1 \in \mathcal{H}_G$, it follows that $H_1 \in \mathcal{H}_G$ as well. Now suppose we have chosen $\{H_1, H_2, \ldots, H_k\}$ such that
\begin{enumerate}
\item $H_i \in \mathcal{H}_G$ for all $1\leq i\leq k$.
\item $H_{i+1} \subset H_i$ for all $1\leq i\leq k-1$ and $H_i \subset L_i$ for all $1\leq i\leq k$.
\item For each $1\leq j\leq k$, $\alpha^{(j)}\lvert_{H_i} : H_i \to \Aut(R_j)$ is unitary for all $j\leq i\leq k$.
\end{enumerate}
For $R_{k+1}$, by \autoref{lem_inner_action_finite_dimensional}, there is a finite index subgroup $G_{k+1} \in \mathcal{H}_G$ such that $\alpha^{(k+1)}\lvert_{G_{k+1}} : G_{k+1}\to \Aut(R_{k+1})$ is unitary. Define $H_{k+1} := H_k\cap L_{k+1}\cap G_{k+1}$. Then, $\{H_1, H_2, \ldots, H_{k+1}\}$ satisfies the three conditions listed above. Proceeding inductively, we construct the sequence $(H_k)$ as desired.
\end{proof}

With these comments, we are now in a position to describe the structure of the crossed product C*-algebra $A\rtimes_{\alpha} G$ when $A$ is an AF-algebra, $G$ is a finitely generated abelian group and $\alpha : G\to \Aut(A)$ is an inductive limit action with finite Rokhlin dimension with commuting towers.

\begin{definition}
For a non-negative integer $n$, let $\mathcal{C}_{n}$ denote the class of all continuous $C(Y)$-algebras, where $Y$ is a compact Hausdorff space with $\dim(Y) \leq n$, and each fiber is a finite dimensional C*-algebra.
\end{definition}

\begin{theorem}\label{thm_local_approximation_crossed_product}
Let $G$ be a finitely generated abelian group of rank $r$, and $\alpha : G\to \Aut(A)$ be an inductive limit action of $G$ on a separable AF-algebra $A$ with $d:= \dr^c(\alpha) < \infty$. Then there is a C*-algebra $B$ that is an inductive limit of members of $\mathcal{C}_{d+r}$ which admits a sequentially split $\ast$-homomorphism $\varphi : A\rtimes_{\alpha} G\to B$.
\end{theorem}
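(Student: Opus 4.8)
The plan is to feed \autoref{prop_sequentially_split_crossed_product_dominating} into an inductive-limit analysis of its target. Fix a generating nest $(R_n)$ of $G$-invariant finite dimensional subalgebras of $A$, write $\alpha^{(n)}$ for the induced action of $G$ on $R_n$, and apply \autoref{lem_inductive_limit_regular_approximation} to obtain a dominating regular approximation $\tau=(H_k)\subseteq\mathcal H_G$ such that $\alpha^{(n)}\lvert_{H_k}$ is unitary whenever $k\geq n$. Put
\[
B:=C(\G_\tau^{\ast(d+1)},A)\rtimes_{\sigma^{(d+1)}\otimes\alpha}G .
\]
By \autoref{prop_sequentially_split_crossed_product_dominating} the inclusion $\eta$ already induces a sequentially split $\ast$-homomorphism $\varphi:=\overline\eta:A\rtimes_\alpha G\to B$, so the whole content of the theorem is to exhibit $B$ as an inductive limit of members of $\mathcal{C}_{d+r}$.

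First I realise $B$ as an inductive limit of $G$-algebras. From the inductive-limit description of $C(\G^{\ast(d+1)})$ established for the profinite completion, together with the $G$-equivariant identification $C(\G^{\ast(d+1)})\cong C(\G_\tau^{\ast(d+1)})$ coming from cofinality of $\tau$, one has $C(\G_\tau^{\ast(d+1)})=\varinjlim_k C((G/H_k)^{\ast(d+1)})$ equivariantly, while $A=\varinjlim_k R_k$ equivariantly; tensoring these and using that the crossed product by $G$ commutes with inductive limits of $G$-algebras, one obtains $B=\varinjlim_k D_k$, where
\[
D_k:=\big(C((G/H_k)^{\ast(d+1)})\otimes R_k\big)\rtimes_{(\sigma^{H_k})^{(d+1)}\otimes\alpha^{(k)}}G .
\]
The connecting maps $D_k\to D_{k+1}$ are those induced by the inclusions $R_k\hookrightarrow R_{k+1}$ and the equivariant maps $(G/H_{k+1})^{\ast(d+1)}\to(G/H_k)^{\ast(d+1)}$, so it suffices to prove $D_k\in\mathcal{C}_{d+r}$ for each $k$.

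Fix $k$ and set $X_k:=(G/H_k)^{\ast(d+1)}$ and $Q:=G/H_k$, a finite group acting freely on $X_k$ through the join power of its translation action on $G/H_k$. Since $H_k$ acts trivially on $G/H_k$, the restriction of $(\sigma^{H_k})^{(d+1)}$ to $H_k$ is trivial, while $\alpha^{(k)}\lvert_{H_k}=\Ad(w)$ for a homomorphism $w:H_k\to\U(R_k)$, whence $R_k\rtimes_{\alpha^{(k)}\lvert_{H_k}}H_k\cong R_k\otimes C^\ast(H_k)$. Applying \autoref{lem_tensor_product_decomposition} with $N=H_k$ therefore produces a twisted action $(\gamma_Q',v_Q')$ of the finite group $Q$ on $E_k:=C(X_k)\otimes R_k\otimes C^\ast(H_k)$ with $D_k\cong E_k\rtimes_{(\gamma_Q',v_Q')}Q$. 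Now $E_k$ is a unital continuous $C(Z_k)$-algebra with finite dimensional fibres, where $Z_k:=\operatorname{spec}Z(E_k)=X_k\times\operatorname{spec}Z(R_k)\times\widehat{H_k}$; since $H_k\cong\Z^r$ we have $\widehat{H_k}\cong\T^r$, $\operatorname{spec}Z(R_k)$ is finite, and the join of $d+1$ finite sets has covering dimension at most $d$, so $\dim Z_k\leq d+r$. The twisted action $(\gamma_Q',v_Q')$ necessarily preserves the centre $Z(E_k)$; the induced $Q$-action on $Z_k$ is a product of the free translation action on $X_k$ with some action on $\operatorname{spec}Z(R_k)\times\widehat{H_k}$, hence free; and $(\gamma_Q',v_Q')$ acts fibrewise over $Y_k:=Z_k/Q$, which (the action being free) again satisfies $\dim Y_k\leq d+r$. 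By \autoref{thm_packer_raeburn_fields}, $D_k=E_k\rtimes_{(\gamma_Q',v_Q')}Q$ is a continuous $C(Y_k)$-algebra whose fibre over each point is a twisted crossed product of a finite dimensional C*-algebra by the finite group $Q$, hence finite dimensional. Thus $D_k\in\mathcal{C}_{d+r}$, and $B=\varinjlim_k D_k$ does the job.

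The step I expect to be the crux is the last paragraph: verifying that $E_k$ really carries a continuous $C(Z_k)$-algebra structure with $\dim Z_k\leq d+r$ (which needs $\dim X_k\leq d$ for joins of finite sets, $\widehat{H_k}\cong\T^r$, and the subadditivity of covering dimension under products and under quotients by free finite group actions), and — more delicately — that the twisted $Q$-action obtained from \autoref{lem_tensor_product_decomposition} genuinely acts fibrewise over the quotient $Y_k=Z_k/Q$, so that \autoref{thm_packer_raeburn_fields} applies and the fibres come out finite dimensional. Checking freeness of the induced $Q$-action on $Z_k$ and making the identification $R_k\rtimes_{\alpha^{(k)}\lvert_{H_k}}H_k\cong R_k\otimes C^\ast(H_k)$ precise are the remaining routine points.
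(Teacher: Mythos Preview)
Your argument is correct and follows essentially the same route as the paper: both produce $B=C(\G_\tau^{\ast(d+1)},A)\rtimes G$ via \autoref{prop_sequentially_split_crossed_product_dominating}, decompose it as an inductive limit of crossed products of $C((G/H_k)^{\ast(d+1)})\otimes R_n$ by $G$, and then use the Packer--Raeburn decomposition through $H_k$ together with $R_n\rtimes H_k\cong R_n\otimes C(\T^r)$ to land in $\mathcal C_{d+r}$. The only real difference is in the endgame: where you view $E_k$ as a $C(Z_k)$-algebra (with $Z_k=X_k\times\operatorname{spec}Z(R_k)\times\widehat{H_k}$), check by hand that the twisted $Q$-action is fibrewise over $Z_k/Q$, and invoke \autoref{thm_packer_raeburn_fields} directly, the paper instead packages this step into \autoref{cor_twisted_cx_algebra} (applied with $X=X_k$ and $D=R_n\otimes C(\T^r)$), which additionally pins down the fibres explicitly as $M_{|G/H_k|}(R_n)$ rather than merely ``finite dimensional''. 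Your flagged ``crux'' checks (continuity of the $C(Z_k)$-structure, freeness on $Z_k$ via the $X_k$-factor, fibrewise over $Z_k/Q$ since $C(Z_k)^Q$ is fixed) all go through, so the only cost of bypassing \autoref{cor_twisted_cx_algebra} is a little extra bookkeeping.
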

\begin{proof}
Let $(R_n)$ be a sequence of $G$-invariant finite dimensional subalgebras, and let $\tau := (H_k)$ be a dominating regular approximation satisfying the conditions of \autoref{lem_inductive_limit_regular_approximation}. Let $\G_{\tau}$ denote the corresponding profinite group. Then, by \autoref{prop_sequentially_split_crossed_product_dominating}, there is sequentially split $\ast$-homomorphism
\[
\overline{\eta} : A\rtimes_{\alpha} G\to C(\G_{\tau}^{\ast (d+1)}, A)\rtimes_{\sigma^{(d+1)}\otimes \alpha} G.
\]
We now show that $C(\G_{\tau}^{\ast (d+1)}, A)\rtimes_{\sigma^{(d+1)}\otimes \alpha} G$ is inductive limit of $\mathcal{C}_{d+r}$. Let $\sigma_k := (\sigma^{H_k})^{(d+1)}$ denote the natural action of $G$ on $C(G/H_k^{\ast (d+1)}))$. Then there are two inductive limits of $G$-algebras
\begin{eqsplit}
(C(\G_{\tau}^{\ast (d+1)}), \sigma^{(d+1)}) &\cong \lim_{k\to \infty} (C(G/H_k^{\ast (d+1)}), \sigma_k), \text{ and } \\
(A, \alpha) &\cong \lim_{n\to \infty} (R_n, \alpha^{(n)}).
\end{eqsplit}
where $\alpha^{(n)} : G\to \Aut(R_n)$ denotes the induced action on $R_n$. Hence,
\begin{eqsplit}
C(\G_{\tau}^{\ast (d+1)}, A)\rtimes_{\sigma^{(d+1)}\otimes \alpha} G &\cong \lim_{n\to \infty} C(\G_{\tau}^{\ast (d+1)}, R_n)\rtimes_{\sigma^{(d+1)}\otimes \alpha^{(n)}} G \\
&\cong \lim_{n\to \infty} \lim_{k\to \infty} C(G/H_k^{\ast (d+1)}, R_n)\rtimes_{\sigma_k\otimes \alpha^{(n)}} G \\
&\cong \lim_{n\to \infty} \lim_{k\to \infty} (C(G/H_k^{\ast (d+1)}, R_n)\rtimes_{\sigma_k\otimes \alpha^{(n)}\lvert_{H_k}} H_k)\rtimes_{\gamma_k^{(n)}, u_k^{(n)}} G/H_k
\end{eqsplit}
for some twisted action $(\gamma_k^{(n)}, u_k^{(n)})$ of $G/H_k$ on $(C(G/H_k^{\ast (d+1)}, R_n)\rtimes_{\sigma_k\otimes \alpha^{(n)}\lvert_{H_k}} H_k)$. Fix $k \in \N$, and write $X_k := G/H_k^{\ast (d+1)}$ and consider the algebra $A_{k,n} := C(X_k,R_n)\rtimes_{\sigma_k\otimes \alpha^{(n)}\lvert_{H_k}} H_k$. Note that $\sigma_k\lvert_{H_k}$ is trivial. Therefore, 
\[
A_{k,n} \cong C(X_k)\otimes (R_n\rtimes_{\alpha^{(n)}\lvert_{H_k}} H_k).
\]
Let $D_{k,n}= R_n\rtimes_{\alpha^{(n)}\lvert_{H_k}} H_k$, for a given $n \in \N$, we may choose $K_0 \in \N$ so that $\alpha^{(n)}\lvert_{H_k}$ is unitary for all $k\geq K_0$ (by \autoref{lem_inductive_limit_regular_approximation}). Therefore,
\[
D_{k,n} \cong R_n\otimes C^{\ast}(H_k) \cong R_n\otimes C(\T^r)
\]
and 
\[
A_{k,n} \cong C(X_k)\otimes R_n\otimes C(\T^r).
\]
By \autoref{lem_tensor_product_decomposition}, this isomorphism respects the twisted action of $G/H_k$. Therefore,
\[
B_{k,n} := A_{k,n} \rtimes_{\gamma_k^{(n)},u_k^{(n)}} G/H_k \cong \left(C(X_k)\otimes R_n\otimes C(\T^r)\right) \rtimes_{\widetilde{\gamma_k^{(n)}},\widetilde{u_k^{(n)}}} G/H_k.
\]
for some other twisted action $(\widetilde{\gamma_k^{(n)}},\widetilde{u_k^{(n)}})$ of $G/H_k$ on $C(X_k)\otimes (R_n\rtimes_{\alpha^{(n)}\lvert_{H_k}} H_k)$. However, the proof of \autoref{lem_tensor_product_decomposition} also shows that $\widetilde{\gamma_k^{(n)}}$ is of the form $\widetilde{\gamma_k^{(n)}} = \widetilde{\sigma_k}\otimes\alpha^{(n)}\otimes \id$, where $\widetilde{\sigma_k} : G/H_k \to \Aut(C(X_k))$ is induced by the natural free action of $G/H_k$ on $X_k$. Since $G/H_k$ is finite, this falls in the ambit of \autoref{cor_twisted_cx_algebra}. Let $Y_k := (X_k\times \T^r)/(G/H_k)$, then $\dim(Y_k) \leq \dim(X_k\times \T^r) \leq d+r$ by the second part of \autoref{thm_dimrok_commuting_towers}. By \autoref{cor_twisted_cx_algebra}, $B_{k,n}$ is a continuous $C(Y_k)$-algebra, each fiber of which is isomorphic to $M_{|G/H_k|}(R_n)$. Hence, $B_{k,n} \in \mathcal{C}_{d+r}$ and the result follows.
\end{proof}

The next corollary is an analogue of \cite[Theorem 4.17]{gardella_hirshberg_santiago} in the context of discrete groups. While the first two estimates are known in much more generality (see \cite[Theorem 6.2]{szabo_wu_zacharias}), the last estimate appears to be new.

\begin{cor}
Let $G$ be a finitely generated abelian group of rank $r$, and $\alpha : G\to \Aut(A)$ be an inductive limit action of $G$ on a separable AF-algebra $A$ with $d := \dr^c(\alpha) < \infty$. Then the following holds,
\begin{enumerate}
\item $\dim_{\mathrm{nuc}}(A\rtimes_\alpha G)\leq d+r$.
\item $\mathrm{dr}(A\rtimes_\alpha G)\leq d+r$.
\item $\mathrm{sr}(A\rtimes_\alpha G)\leq d+r+1$.
\item When $r=1$, $\mathrm{sr}(A\rtimes_\alpha G)\leq d+1$.
\end{enumerate}
\end{cor}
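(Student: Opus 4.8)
The plan is to read all four estimates off \autoref{thm_local_approximation_crossed_product}. That result produces a sequentially split $\ast$-homomorphism $\varphi\colon A\rtimes_\alpha G\to B$ with $B$ an inductive limit of members of $\mathcal{C}_{d+r}$, and its proof in fact exhibits $B$ as $\varinjlim_n\varinjlim_k B_{k,n}$, where each $B_{k,n}$ is a continuous $C(Y_k)$-algebra whose base $Y_k$ is a finite complex with $\dim(Y_k)\le d+r$ and whose fibres are finite-dimensional, so that each $B_{k,n}$ is a recursive subhomogeneous algebra of topological dimension $\le d+r$. Nuclear dimension, decomposition rank and stable rank are all non-increasing under inductive limits (the first two by the work of Winter and Zacharias, the third by Rieffel's theorem), and all three pass from the codomain to the domain of a sequentially split $\ast$-homomorphism by the full statement of \cite[Theorem~2.9]{barlak_szabo}, of which \autoref{rem_sequentially_split_inheritance} isolates only two instances. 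It therefore suffices to bound these invariants on the algebras $B_{k,n}$.

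For (1) and (2), the fibres of $B_{k,n}$ are finite-dimensional and so have nuclear dimension and decomposition rank $0$; the standard dimension estimates for $C(Y)$-algebras then give $\dim_{\mathrm{nuc}}(B_{k,n})\le\dim(Y_k)\le d+r$ and $\mathrm{dr}(B_{k,n})\le\dim(Y_k)\le d+r$. For (3), the fibres have stable rank $1$, so the corresponding estimate for the stable rank of $C(Y)$-algebras gives $\mathrm{sr}(B_{k,n})\le\dim(Y_k)+1\le d+r+1$ (one could instead apply Phillips's bound $\mathrm{sr}\le\lfloor\dim_{\mathrm{top}}/2\rfloor+1$ for recursive subhomogeneous algebras). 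Passing to the inductive limit and then through $\varphi$ proves (1), (2) and (3).

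The point of (4) is to improve the bound $\mathrm{sr}\le d+2$ coming from (3) by one. When $r=1$, the free subgroups $H_k\in\mathcal{H}_G$ used in the proof of \autoref{thm_local_approximation_crossed_product} are infinite cyclic, so $C^{*}(H_k)\cong C(\T)$; tracing that proof, one sees that the finite group $G/H_k$ acts in \autoref{cor_twisted_cx_algebra} through an action that is trivial on this $C(\T)$-factor (it is $\widetilde{\sigma_k}\otimes\alpha^{(n)}\otimes\id$). Hence the quotient defining $Y_k$ splits off the circle: $Y_k=(X_k\times\T)/(G/H_k)\cong Z_k\times\T$ with $Z_k:=X_k/(G/H_k)$ and $\dim(Z_k)\le\dim(X_k)\le d$. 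I would then view $B_{k,n}$ as a $C(Z_k)$-algebra via $C(Z_k)\hookrightarrow C(Z_k\times\T)$: each of its fibres over $Z_k$ is a continuous field of finite-dimensional C*-algebras over $\T$, hence a recursive subhomogeneous algebra of topological dimension $\le 1$ and so of stable rank $\le 1$. The $C(Z_k)$-algebra estimate now gives $\mathrm{sr}(B_{k,n})\le 1+\dim(Z_k)\le d+1$, and passing to the limit and through $\varphi$ once more yields $\mathrm{sr}(A\rtimes_\alpha G)\le d+1$.

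The step I expect to be the real obstacle is this last one. First, one has to confirm that $Y_k$ genuinely decomposes as $Z_k\times\T$; this is precisely where the hypothesis that $\alpha$ is an inductive limit action on an AF-algebra enters, since it is what produces the tensor factor $C^{*}(H_k)\cong C(\T)$ carrying a trivial residual $G/H_k$-action in the decomposition of \autoref{thm_decomposition}. Second, one needs the input that a continuous field of finite-dimensional C*-algebras over a one-dimensional compact metric space has stable rank one; I would either invoke Phillips's stable rank bound for recursive subhomogeneous algebras or prove it by hand from $\mathrm{sr}(M_m(C(\T)))=1$ together with the path-connectedness of $\mathrm{GL}_m(\C)$, carrying out the perturbation arc by arc over the one-dimensional base.
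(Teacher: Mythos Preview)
Your approach is essentially the same as the paper's: for (1)--(3) both reduce to the building blocks $B_{k,n}\in\mathcal{C}_{d+r}$ via \autoref{thm_local_approximation_crossed_product} and invoke the standard dimension and rank estimates for continuous $C(Y)$-algebras (the paper cites \cite[Lemma~3.1]{carrion2011} and \cite[Theorem~4.13(5)]{gardella_hirshberg_santiago} specifically), and for (4) both observe that when $r=1$ each $B_{k,n}$ is a $C(Z)$-algebra with $\dim(Z)\le d$ whose fibres are circle algebras, hence of stable rank one. Your treatment of (4) is in fact more explicit than the paper's one-line assertion that ``the proof shows'' this fibre structure, and your caution about whether those fibres are literal circle algebras versus merely subhomogeneous over $\T$ is harmless either way.
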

\begin{proof}
By \cite[Proposition 4.14]{gardella_hirshberg_santiago} and \autoref{thm_local_approximation_crossed_product}, it suffices to prove the above statements for $C(\G_{\tau}^{\ast (d+1)}, A)\rtimes_{\sigma^{(d+1)}\otimes \alpha} G$. Since the latter is an inductive limit of members of $\mathcal{C}_{d+r}$, and all three ranks (nuclear dimension, decomposition rank, and stable rank) behave well under the passage to inductive limits, it suffices to prove these inequalities for members of $\mathcal{C}_{d+r}$. \\

If $E \in \mathcal{C}_{d+r}$, the fact that $\dim_{\mathrm{nuc}}(E)\leq (d+r+1)-1=d+r$ follows from \cite[Lemma 3.1]{carrion2011}, and the same proof also shows that $\mathrm{dr}(E)\leq d+r$. Moreover, the fact that $\mathrm{sr}(E) \leq d+r+1$ follows from \cite[Theorem 4.13(5)]{gardella_hirshberg_santiago}. \\

Finally, when $r=1$, consider $B = C(\G_{\tau}^{\ast (d+1)}, A)\rtimes_{\sigma^{(d+1)}\otimes \alpha} G$ from \autoref{thm_local_approximation_crossed_product}. The proof shows that $B$ is an inductive limit of $C(Z)$-algebras, whose fibers are isomorphic to  circle algebras and $\dim(Z)\leq d$. Each such fiber has stable rank $1$ by \cite[Corollary V.3.1.3 and V.3.1.16]{blackadar_op_alg}, and so $\mathrm{sr}(B) \leq d+1$ by \cite[Theorem 4.13(5)]{gardella_hirshberg_santiago}.

\end{proof}

In order to truly reap the rewards of \autoref{thm_local_approximation_crossed_product}, we must now turn our attention to actions of $\Z$. Note that this amounts to understanding a single automorphism. To do this, we need some ideas introduced by Voiculescu in \cite{voiculescu}.  \\

Fix a unital, separable AF-algebra $A$. For two C*-algebras $C, D \subset A$ and $\epsilon > 0$, we write $C\subset^{\epsilon} D$ if for every $x\in C_{\leq 1}$ there exists $y\in D_{\leq 1}$ such that $\|x-y\| < \epsilon$. We then define
\[
d(C_1,C_2) := \inf\{\epsilon > 0 : C_1\subset^{\epsilon} C_2 \text{ and } C_2\subset^{\epsilon} C_1\}.
\]

\begin{definition}
Let $A$ be a unital, separable AF-algebra and $\alpha \in \Aut(A)$ be a fixed automorphism. Then, $\alpha$ is called
~\begin{enumerate}
\item an almost inductive limit automorphism if there exists a generating nest of finite dimensional C*-subalgebras $(A_n)$ of $A$, such that $\lim_{n\to \infty} d(\alpha(A_n),A_n) = 0$. 
\item an inductive limit automorphism if there is a generating nest of finite dimensional $\alpha$-invariant C*-subalgebras of $A$.
\item a limit periodic automorphism if there is a generating nest of finite dimensional $\alpha$-invariant C*-algebras $(A_n)$ of $A$ such that $\alpha\lvert_{A_n} \in \Aut(A_n)$ has finite order for each $n \in \N$.
\item an approximately inner automorphism if it is a point-norm limit of inner automorphisms of $A$ (equivalently, $K_0(\alpha) = \id_{K_0(A)}$).
\end{enumerate}
\end{definition}

\begin{lemma}\label{lem_rokhlin_almost}
Let $A$ be a unital, separable AF-algebra and $\alpha:\Z\to \Aut(A)$ be an approximately inner automorphism. If $\alpha$ has Rokhlin property then there is a unitary $u \in \U(A)$ such that $\Ad(u)\circ \alpha$ is a limit periodic automorphism.
\end{lemma}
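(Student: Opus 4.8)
The plan is to combine the Rokhlin property with approximate innerness to produce, for a carefully chosen generating nest, a unitary that simultaneously conjugates $\alpha$ into something that preserves each algebra in the nest with finite order. First I would fix a generating nest $(A_n)$ of finite dimensional C*-subalgebras of $A$. Since $\alpha$ is approximately inner, for each $n$ there is a unitary $v_n \in \U(A)$ with $\Ad(v_n)\circ\alpha$ close to the identity on a prescribed finite subset of $A_n$; in particular one can arrange $d((\Ad(v_n)\circ\alpha)(A_n), A_n)$ to be as small as we like. The point of the Rokhlin property is that it gives the approximate commutativity and tower structure needed to stabilize these local corrections into a single global unitary $u$. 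Concretely, I expect to use a one-cohomology / intertwining argument: build a sequence of unitaries $u_n$ with $u_{n+1}u_n^*$ converging to $1$ fast enough that $u := \lim u_n$ exists, while $\Ad(u_n)\circ\alpha$ increasingly stabilizes $A_n$. The Rokhlin property enters because it lets one perturb $\alpha$ to an automorphism with large finite-order Rokhlin towers, and the AF structure lets us absorb these towers into finite dimensional subalgebras.

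The key technical input I would invoke is the relationship between the Rokhlin property for approximately inner automorphisms of AF-algebras and limit periodicity that appears in work of Evans--Kishimoto and Bratteli--Evans--Kishimoto--R\o rdam (the same circle of references cited in the introduction): an approximately inner automorphism of a unital AF-algebra with the Rokhlin property is, up to conjugation by an inner automorphism, an inductive limit automorphism, and moreover the local automorphisms can be taken to be \emph{periodic}. The mechanism is that the Rokhlin towers provide projections $e_0, e_1, \ldots, e_{p-1}$ with $\alpha(e_i) \approx e_{i+1}$, summing to $1$, which (after a small perturbation making the relations exact) allow $\alpha$ to be cut into a "shift on the tower" part — which has order $p$ on the finite dimensional algebra generated by the tower — plus an error that gets pushed into later stages of the nest. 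Iterating this across the nest with control on the unitaries needed at each stage yields the single unitary $u$.

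The main steps, in order, would be: (1) Fix a generating nest $(A_n)$; using approximate innerness, pass to a subnest along which $d(\alpha(A_n),A_n)\to 0$ after conjugating by suitable inner automorphisms, so that $\Ad(v)\circ\alpha$ is an almost inductive limit automorphism for an appropriate unitary $v$. (2) Use the Rokhlin property to obtain, for each $n$, Rokhlin projections approximately commuting with a large finite subset of $A_n$; perturb to make the tower relations exact on a finite dimensional subalgebra $B_n \supseteq A_n$. (3) On $B_n$, the perturbed automorphism restricts to an automorphism of finite order (the period of the tower). (4) Run a one-sided intertwining / Elliott-type approximate-conjugacy argument: inductively choose unitaries $u_n$, using the Rokhlin estimates to keep $\|u_{n+1}-u_n\|$ summable, so that $u = \lim u_n$ exists and $\Ad(u)\circ\alpha$ preserves each $B_n$ (hence a cofinal subnest) with finite order on it. (5) Conclude that $\Ad(u)\circ\alpha$ is limit periodic with respect to the nest $(B_n)$.

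The hard part will be step (4): making the local corrections globally consistent. Each application of the Rokhlin property corrects $\alpha$ on $A_n$ but may disturb what was arranged at earlier stages, and the period $p_n$ of the Rokhlin tower needed at stage $n$ will in general grow with $n$, so "finite order on $B_n$" is a moving target. Controlling this requires the standard but delicate bookkeeping of a reindexing argument: choosing the finite subsets, tolerances $\epsilon_n$, and tower periods in the right order so that each new perturbation is small enough not to destroy the finite-order property already established on $B_1, \ldots, B_{n-1}$ (in $A$, not just approximately), and arranging that the $B_n$ genuinely form an increasing nest with dense union. The AF structure — in particular the fact that close finite dimensional subalgebras are unitarily conjugate by a unitary close to $1$ (Voiculescu's estimate, which motivates the $d(\cdot,\cdot)$ notation introduced just above) — is what makes the summability of $\|u_{n+1}-u_n\|$ attainable and is the crucial lever for pushing the whole argument through.
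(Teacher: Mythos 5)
Your overall strategy --- reduce to Voiculescu's framework of almost inductive limit automorphisms and let his intertwining machinery produce the unitary $u$ --- is the right one, and it is essentially what the paper does. But there is a genuine gap at the one place where the Rokhlin property must actually be used. In your step (1) you assert that approximate innerness alone (after conjugating by suitable inner automorphisms) already makes $\alpha$ an almost inductive limit automorphism. This is not justified: approximate innerness gives, for each finite dimensional $D\subset A$ and $\epsilon>0$, only a finite \emph{cycle} of finite dimensional subalgebras $B_1,\ldots,B_m$ containing $D$ with $d(\alpha(B_j),B_{j+1})<\epsilon$ cyclically (Voiculescu's Lemma 3.1); it does not produce a single subalgebra $B$ with $d(\alpha(B),B)$ small, and no single unitary $v$ repairs this, since the local correcting unitaries $v_n$ need not converge. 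If your step (1) were correct, the Rokhlin hypothesis would be irrelevant to almost-inductive-limitness, which it is not. The actual new content of the proof is precisely the splicing of this cycle into one almost invariant algebra: take a Rokhlin tower of projections $e_1,\ldots,e_m$ whose length matches the length of the cycle, approximately commuting with the unit balls of the $B_j$ and with $\alpha(e_j)\approx e_{j+1}$ cyclically, and set $B:=\sum_{j=1}^m e_jB_je_j$; one then checks $D\subset^{\epsilon}B$ and $d(\alpha(B),B)<2\epsilon$. This construction is absent from your outline.

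Two further points. First, your step (4) --- the one-sided intertwining producing $u$ and the upgrade from almost inductive limit to \emph{limit periodic} --- does not need to be redone: it is exactly Propositions 2.3 and 2.5 of \cite{voiculescu}, and the paper simply cites them. Your alternative plan, to make the tower relations exact and extract genuine finite order on subalgebras $B_n$ directly, is considerably harder than you indicate: the algebra $B$ built at each stage contains the previous stage only up to $\epsilon$ (not exactly), so the $B_n$ do not literally form a nest, which is the whole reason Voiculescu's metric $d$ and his Proposition 2.3 exist; you acknowledge but do not resolve the resulting bookkeeping. Second, the ``key technical input'' you propose to invoke from Kishimoto et al.\ is proved there for UHF algebras; the lemma under discussion is precisely its extension to AF algebras, so invoking that statement as a black box would be circular.
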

\begin{proof}
By \cite[Proposition 2.3 and Proposition 2.5]{voiculescu}, it suffices to show that $\alpha$ is an almost inductive limit automorphism. This is mentioned in passing in \cite[Remark 3.4]{voiculescu} and we give a detailed argument. Indeed, it suffices to show that for each finite dimensional C*-subalgebra $D\subset A$ and each $\epsilon > 0$, there is a finite dimensional C*-subalgebra $B$ of $A$ such that $D\subset^{\epsilon} B$ and $d(\alpha(B),B) < \epsilon$. To prove this, fix a finite dimensional subalgebra $D$ and an $\epsilon > 0$. Since $\alpha$ is approximately inner, there is $m \in \N$ and finite dimensional subalgebras $\{B_1, B_2, \ldots, B_m\}$ such that $D\subset B_j$ for all $1\leq j\leq m$ and $d(\alpha(B_j), B_{j+1}) < \epsilon$ for all $1\leq j\leq m$ with the understanding that $B_{m+1} = B_1$ (by \cite[Lemma 3.1]{voiculescu}). \\

Set $\delta := \frac{\epsilon}{3(m+1)} > 0$. Since $\alpha$ has the Rokhlin property, there is a $\ast$-homomorphism $\psi : C(\Z/m\Z) \to A$ satisfying the conditions of \autoref{defn_rok_dim} for $F = \{1_A\}\cup \{x\in \bigcup_{j=1}^m(B_j \cup \alpha(B_j)) : \|x\| \leq 1\}$ (which is compact). Setting $e_i = \psi(\delta_{\overline{i}})$, we obtain projections $\{e_1, e_2,\ldots, e_m\}$ such that
\begin{enumerate}
\item $e_ie_j = 0$ if $i\neq j$,
\item $xe_i \approx_{\delta} e_ix$ for all $x\in F$
\item $\sum_{i=1}^{m}e_i \approx_{\delta} 1_A$,
\item $\alpha(e_i) \approx_{\delta} e_{i+1}$ with the understanding that $e_{m+1} = e_1$.
\end{enumerate}
Define $B := \sum_{j=1}^{m} e_jB_je_j$. Then, $B$ is a finite dimensional C*-subalgebra of $A$, and if $a\in D_{\leq 1}$, then
\[
a \approx_{\delta} \sum_{j=1}^{m_k}a e_j \approx_{m\delta} \sum_{j=1}^m e_jae_j \in B,
\]
so $D\subset^{\epsilon} B$. Now let $x\in B$ be chosen such that $x = \sum_{j=1}^{m} e_jx_je_j$ with $x_j \in (B_j)_{\leq 1}$ for all $1\leq j\leq m$. Then there exist $y_j \in (B_j)_{\leq 1}$ such that $\alpha(x_j) \approx_{\epsilon} y_{j+1}$ for all $1\leq j\leq m$ (again, with the understanding that $y_{m+1} = y_1$). Now set $y := \sum_{j=1}^{m} e_jy_je_j \in B$. Then,
\begin{eqsplit}
\alpha(x) &= \sum_{j=1}^{m} \alpha(e_j)\alpha(x_j)\alpha(e_j) \approx_{2m\delta} \sum_{j=1}^{m} e_{j+1}\alpha(x_j)e_{j+1} \approx_{\epsilon} \sum_{j=1}^{m} e_{j+1}y_{j+1}e_{j+1} = y 
\end{eqsplit}
by \cite[Lemma 5.5]{suresh_pv}. It follows that $d(\alpha(B),B) < 2\epsilon$, and we conclude that $\alpha$ is an almost inductive limit automorphism.
\end{proof}

We are now in a position to complete the proof of \autoref{mainthm_rokhlin_integer}.

\begin{proof}[Proof of \autoref{mainthm_rokhlin_integer}]
By \autoref{lem_rokhlin_almost}, we may assume that $\alpha$ is a limit periodic automorphism. Let $(R_n)$ be a sequence of $\alpha$-invariant finite dimensional subalgebras such that $\alpha^{(n)} := \alpha\lvert_{R_n}$ has finite order for each $n \in \N$. Hence, for each $n \in \N$, there is a subgroup $H_n < \Z$ such that $\alpha^{(n)}_{H_n} : H_n\to \Aut(R_n)$ is trivial. Moreover, we may choose these subgroups as we did in \autoref{lem_inductive_limit_regular_approximation} so that the family $\tau := (H_k)$ forms a dominating regular approximation and for each $n \in \N$, $\alpha^{(n)}\lvert_{H_k} : H_k \to \Aut(R_n)$ is trivial for all $k\geq n$. Let $\overline{\Z}_{\tau}$ denote the corresponding profinite group. Then, by \autoref{prop_sequentially_split_crossed_product_dominating}, there is sequentially split $\ast$-homomorphism
\[
\overline{\eta} : A\rtimes_{\alpha} \Z\to C(\overline{\Z}_{\tau}, A)\rtimes_{\sigma\otimes \alpha} \Z.
\]
where $\sigma : \Z\to \Aut(C(\overline{\Z}_{\tau}))$ denotes the natural action as before. By \autoref{rem_sequentially_split_inheritance}, it now suffices to show that $C(\overline{\Z}_{\tau}, A)\rtimes_{\sigma\otimes \alpha} \Z$ is an A$\T$-algebra. Let $\sigma_k := \sigma^{H_k}$ denote the natural action of $\Z$ on $C(\Z/H_k)$. As in \autoref{thm_local_approximation_crossed_product}, we have
\[
C(\overline{\Z}_{\tau}, A)\rtimes_{\sigma\otimes \alpha} \Z \cong \lim_{n\to \infty}\lim_{k\to \infty} B_{k,n}
\]
where $B_{k,n} = C(\Z/H_k, R_n)\rtimes_{\sigma_k\otimes \alpha^{(n)}} \Z$. By Green's Theorem \cite[Corollary 2.8]{green}, $B_{k,n} \cong (R_n\rtimes_{\alpha^{(n)}_{H_k}} H_k)\otimes \k(\ell^2(\Z/H_k))$. Since $\alpha^{(n)}_{H_k}$ is trivial for $k\geq n$, $R_n\rtimes_{\alpha^{(n)}_{H_k}} H_k \cong R_n\otimes C(\T)$. Therefore, $B_{k,n}$ is a circle algebra, and it follows that $C(\overline{\Z}_{\tau}, A)\rtimes_{\sigma\otimes \alpha} \Z$ is an A$\T$-algebra. \\

For the second part of the theorem, assume that $A$ is simple and we wish to prove that $A\rtimes_{\alpha} \Z$ is a simple C*-algebra with real rank zero. Once again, by \autoref{rem_sequentially_split_inheritance}, it suffices to show that $C(\overline{\Z}_{\tau}, A)\rtimes_{\sigma\otimes \alpha} \Z$ is simple. Note that the action of $\Z$ on $\overline{\Z}_{\tau}$ is free and minimal. Since $A$ is simple, it follows that $C(\overline{\Z}_{\tau}, A)$ has no $\Z$-invariant ideals. Morever, $\sigma\otimes \alpha$ also has the Rokhlin property by \cite[Theorem 3.2(1)]{suresh_pv}. Therefore,  $C(\overline{\Z}_{\tau}, A)\rtimes_{\sigma\otimes \alpha} \Z$ is a simple C*-algebra by \cite[Theorem 5.6]{suresh_pv}. \\

Finally, to prove that $A\rtimes_{\alpha} \Z$ has real rank zero, we must appeal to \cite{blackadar_real_rank}. In other words, we need to show that the projections in $A\rtimes_{\alpha} \Z$ separate the tracial states on $A\rtimes_{\alpha} \Z$. Note that $A$ is necessarily infinite dimensional (since $\alpha$ is outer) and is therefore approximately divisible by \cite[Theorem 4.1]{blackadar_kumjian_rordam}. Since $A$ has real rank zero, the projections in $A$ separate the tracial states on $A$ by \cite[Theorem 1.4]{blackadar_kumjian_rordam}. Now suppose $\phi$ is a tracial state on $A\rtimes_{\alpha} \Z$, then the argument of \cite[Lemma 4.3]{kishimoto_shifts} show that $\phi(aU^n) = 0$ for all $n \neq 0$ (here, $U$ denotes the canonical unitary in $A\rtimes_{\alpha} \Z$ implementing the action). Therefore, $\phi$ is completely determined by its values on $A$. Hence, the projections in $A$ must themselves separate the tracial states on $A\rtimes_{\alpha} \Z$. By \cite[Theorem 1.3]{blackadar_real_rank}, it follows that $A\rtimes_{\alpha} \Z$ has real rank zero. This completes the proof.
\end{proof}

\textbf{Acknowledgements:} The first named author is supported by National Board for Higher Mathematics Ph.D. Fellowship (Fellowship No. 0203/26/2022/R{\&}D-II/16171) and the second named author was partially supported by the Anusandhan National Research Foundation (Grant No. MTR/2020/000385). The authors would like to thank the anonymous referee for their comments which notably improved \autoref{cor_twisted_cx_algebra} and its applications in \autoref{thm_local_approximation_crossed_product}.

\end{document}